\documentclass[11pt,a4paper]{article}
\usepackage{amsmath,amsfonts,amssymb,amsthm}
\title{Random points in halfspheres\footnote{Partially supported by ERC Advanced Research Grant no 267165 (DISCONV). The first author was supported by Hungarian National Foundation Grant K 111827.
The second author was partially supported by the German Research Foundation (DFG) under the grant HU 1874/4-2.}}

\author{Imre B\'{a}r\'{a}ny, Daniel Hug, Matthias Reitzner, Rolf Schneider}
\date{}
\sloppy
\jot3mm

\usepackage{amssymb}
\usepackage[all,cmtip]{xy}
\usepackage{xcolor}

\oddsidemargin 0.2cm
\evensidemargin 0.2cm
\topmargin 0.4cm
\headheight0cm
\headsep0cm
\textheight22.5cm
\topskip2ex
\textwidth15.5cm
\parskip1ex plus0.5ex minus0.5ex

\newtheorem{Proposition}{Proposition}[section]

\newtheorem{Lemma}[Proposition]{Lemma}
\newtheorem{Theorem}[Proposition]{Theorem}

\newcommand{\Sd}{{\mathbb S}^d}
\newcommand{\Rd}{{\mathbb R}^{d+1}}

\newcommand{\D}{{\rm d}}
\newcommand{\fed}{\,\rule{.1mm}{.26cm}\rule{.24cm}{.1mm}\,}

\newcommand{\Pb}{{\mathbb P}}
\newcommand{\E}{{\mathbb E}\,}
\newcommand{\eps}{\varepsilon}

\newcommand{\Se}{{\mathbb S}_e^+}
\newcommand{\dS}{\partial\hspace{1pt}{\mathbb S}_e^+}

\begin{document}

\maketitle

\begin{abstract}
A random spherical polytope $P_n$ in a spherically convex set $K \subset S^d$ as considered here is the spherical convex hull of $n$ independent, uniformly distributed random points in $K$. The behaviour of $P_n$ for a spherically convex set $K$ contained in an open halfsphere is quite similar to that of a similarly generated random convex polytope in a Euclidean space, but the case when $K$ is a halfsphere is different. This is what we investigate here,  establishing the asymptotic behaviour, as $n$ tends to infinity, of the expectation of several characteristics of $P_n$, such as facet and vertex number, volume and surface area. For the Hausdorff distance from the halfsphere, we obtain also some almost sure asymptotic estimates.
\end{abstract}

{\em Key words and phrases:} Spherical spaces; random polytopes in halfspheres

{\em AMS 2000 subject classifications.} 60D05, 52A22; secondary 52A55

\section{Introduction}\label{sec1}

Ever since the seminal articles by R\'{e}nyi and Sulanke \cite{RS63}, \cite{RS64}, convex hulls of random points and their asymptotic behaviour when the number of points increases, have been a favourite topic in stochastic geometry. For overviews, we refer the reader to the Notes for Subsection 8.2.4 in \cite{SW08} and to the more recent surveys \cite{Rei10} and \cite{Hug13}. In a thoroughly studied setting, one assumes $n$ stochastically independent, uniformly distributed points in a given convex body $K$ in $d$-dimensional Euclidean space and studies the asymptotic behaviour of the convex hull of the random points as the number $n$ tends to infinity. Quantities of interest may be the face numbers of the random polytopes or the quality of approximation of $K$ by the polytopes, measured, for instance, by differences of volumes or intrinsic volumes or by the Hausdorff distance. Already the first articles by R\'{e}nyi and Sulanke exhibited the strong influence of the boundary structure of $K$ on the asymptotic behaviour of the random polytopes. For smooth bodies and for polytopes, for example, the asymptotics are essentially different, and in the former case, curvatures enter the results in an essential way.

More recently, stochastic geometry in spherical spaces has found increasing interest. Many of the questions that have been treated in Euclidean space have counterparts for spherical space, and may have similar answers. A new phenomenon, however, arises if one considers random points in a halfsphere. Its boundary, as a submanifold, has zero curvature, and this should lead to types of asymptotic behaviour that cannot be observed in Euclidean spaces. It is the purpose of this note to collect first results on spherical convex hulls of random points in halfspheres. For $n$ independent, uniformly distributed random points in a $d$-dimensional closed halfsphere, we study their spherical convex hull and investigate the expected values of some geometric functionals for these spherical polytopes. For facet number, surface area, and spherical mean width, we obtain explicit integral expressions for their expectations, from which the asymptotic behaviour, as $n\to\infty$, can be deduced. For volume and vertex number, some more elaborate arguments are required to determine their asymptotics. Finally, we establish bounds for the almost sure asymptotic behaviour of the spherical Hausdorff distance between the considered random polytopes and the halfsphere.

\section{Preliminaries}\label{sec2}

The sphere $\Sd$ ($d\ge 2$) in which we are interested is taken as the unit sphere of the real vector space $\Rd$ with its standard scalar product $\langle\cdot\, ,\cdot\rangle$. The spherical distance of two points $x,y\in\Sd$ is given by $d_s(x,y)=\arccos\langle x,y\rangle$. The spherical Hausdorff distance of two nonempty compact sets $K,M\subset\Sd$ is defined by
\begin{equation}\label{2.0}
\delta_s(K,M):=\max\left\{\max_{x\in K}\min_{y\in M} d_s(x,y),\, \max_{x\in M} \min_{y\in K} d_s(x,y)\right\}.
\end{equation}

We denote by $\lambda$ the Lebesgue measure on $\Rd$ and by $\sigma$ the spherical Lebesgue measure on $\Sd$. The $(d-1)$-dimensional spherical Lebesgue measure on great subspheres of dimension $d-1$ is denoted by $\sigma_{d-1}$. The constant
$$ \omega_{d+1}= \sigma(\Sd) = \frac{2\pi^{\frac{d+1}{2}}}{\Gamma\left(\frac{d+1}{2}\right)}$$
is the total measure of $\Sd$.
The number $\kappa_{d+1}=\omega_{d+1}/(d+1)$ is the volume of the unit ball in $\Rd$.

By $G(d+1,d)$ we denote the Grassmannian of $d$-dimensional linear subspaces of $\Rd$, equipped with its standard topology. The unique rotation invariant (Borel) probability measure on $G(d+1,d)$ is denoted by $\nu$.

We fix a vector $e\in \Sd$ and consider the closed halfsphere
$$ \Se:=\{u\in \Sd: \langle u,e\rangle\ge 0\}.$$
Its boundary is denoted by $\dS:=e^\perp\cap\Sd$. The uniform probability measure $\mu$ on $\Se$ is given by
$$ \mu= \frac{2\sigma\fed\Se}{\omega_{d+1}}.$$

For $x\in\Se$ and $\eps>0$, we define by $B(x,\eps):= \{y\in\Se:d_s(x,y)\le\eps\}$ the closed ball in $\Se$ with centre $x$ and radius $\eps$. 

By a {\em spherical polytope} in $\Sd$ we understand here the intersection $C\cap \Sd$ of $\Sd$ with a pointed closed convex polyhedral cone $C$ in $\Rd$. Such a cone is the intersection of finitely many closed halfspaces with $0$ in the boundary, provided that it is different from $\{0\}$ and does not contain a line. The set of spherical polytopes is equipped with the topology induced by the spherical Hausdorff distance and the corresponding Borel structure. For a spherical polytope $P$ and for $k\in\{0,\dots,d-1\}$, we understand by ${\mathcal F}_k(P)$ the set and by $f_k(P)$ the number of its $k$-dimensional faces.

We consider stochastically independent random points $X_1,\dots,X_n$, $n\ge d+1$, in $\Se$, each with distribution $\mu$. Then we define the spherical random polytope
\begin{equation}\label{2.1}
P_n:= {\rm conv}_s\{X_1,\dots,X_n\},
\end{equation}
where the spherical convex hull of a set $A\subset \Sd$ is defined by
$$ {\rm conv}_s(A):= \Sd\cap{\rm pos}\,A$$
and ${\rm pos}\,$ denotes the positive hull in $\Rd$.

\section{Facet Number}\label{sec3}

In this section and in Section 5, we consider functionals of spherical polytopes which are of the following type. Let $\eta$ be a rotation invariant nonnegative measurable function on spherical $(d-1)$-polytopes. For a spherical polytope $P$, let
$$ \varphi(\eta,P):= \sum_{F\in{\mathcal F}_{d-1}(P)} \eta(F).$$
Thus, for $\eta(F)=1$, we get the facet number $\varphi(1,P)=f_{d-1}(P)$, and $\eta(F)=\sigma_{d-1}(F)$ yields the  surface area $\varphi(\sigma_{d-1},P)=:S(P)$.

We will be interested in the random variable
$$ \varphi(\eta,n) := \varphi(\eta,P_n)$$
with $P_n$ given by (\ref{2.1}). In particular, $\varphi(1,n)=f_{d-1}(P_n)$ and $\varphi(\sigma_{d-1},n)=S(P_n)$.

\begin{Theorem}\label{T1} Let $P_n$ be the spherical convex hull of $n\ge d+1$ independent uniform random points on the halfsphere $\Se$. Then
\begin{equation}\label{3.1}
\E f_{d-1}(P_n) =  \frac{2\omega_d}{\omega_{d+1}} \binom{n}{d}\int_0^\pi \left(1-\frac{\alpha}{\pi}\right)^{n-d}\sin^{d-1}\alpha\,\D \alpha.
\end{equation}
Further,
\begin{equation}\label{3.2}
\lim_{n\to\infty} \E f_{d-1}(P_n)= 2^{-d}d!\kappa_d^2.
\end{equation}
\end{Theorem}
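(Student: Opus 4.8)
The plan is a spherical Efron-type identity followed by the spherical Blaschke--Petkantschin formula. Almost surely $X_1,\dots,X_n$ are in general linear position, so $f_{d-1}(P_n)$ equals the number of $d$-subsets of $\{X_1,\dots,X_n\}$ spanning a facet of $P_n$; taking expectations and using exchangeability,
\[ \E f_{d-1}(P_n)=\binom nd\,\Pb\bigl(\mathrm{conv}_s\{X_1,\dots,X_d\}\ \text{is a facet of}\ P_n\bigr). \]
Conditionally on $X_1,\dots,X_d$, write $H=\mathrm{lin}\{X_1,\dots,X_d\}$ for the a.s.\ unique linear hyperplane spanned by them, with open sides $H^+,H^-$. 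The event in question occurs iff $X_{d+1},\dots,X_n$ all lie strictly on one side of $H$, which has conditional probability $\mu(H^+)^{n-d}+\mu(H^-)^{n-d}$. A short computation with spherical wedges expresses the two side-masses through the spherical angle $\phi=d_s(e,u)$ between $e$ and a unit normal $u$ of $H$: the region $\Se\cap\{v:\langle v,u\rangle\ge 0\}$ is the intersection of $\Sd$ with two closed halfspaces through $0$, so by rotational symmetry about its edge $e^\perp\cap u^\perp$ its $\sigma$-measure equals the enclosed dihedral angle $\pi-\phi$ times $\omega_{d+1}/(2\pi)$; hence $\mu(H^+)=1-\phi/\pi$ and $\mu(H^-)=\phi/\pi$. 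Therefore
\[ \E f_{d-1}(P_n)=\binom nd\,\E\!\left[\Bigl(1-\tfrac{\Phi}{\pi}\Bigr)^{n-d}+\Bigl(\tfrac{\Phi}{\pi}\Bigr)^{n-d}\right],\qquad \Phi:=d_s\bigl(e,u(H)\bigr), \]
the bracket being independent of the choice of sign of $u(H)$.

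The crux is that $H=\mathrm{lin}\{X_1,\dots,X_d\}$ is $\nu$-distributed on $G(d+1,d)$. I would deduce this from the spherical Blaschke--Petkantschin formula: rewriting the last expectation as an integral over $(\Se)^d$ against $\mu^{\otimes d}$ and applying that formula, it factors into a $\nu$-integral over $L\in G(d+1,d)$ of the integrand (which depends on $L$ only via $\Phi$) times the inner integral $\int_{(L\cap\Se)^d}\nabla_d(x_1,\dots,x_d)\,\sigma_{d-1}^{\otimes d}(\D\mathbf x)$, where $\nabla_d$ is the volume of the parallelepiped spanned by $x_1,\dots,x_d$. For $\nu$-almost every $L$ the set $L\cap\Se$ is a closed halfsphere of the great subsphere $L\cap\Sd$, hence is carried by an orthogonal map of $L$ onto a fixed $(d-1)$-dimensional halfsphere, while $\nabla_d$ is invariant under such maps; thus the inner integral does not depend on $L$. (This $L$-independence, which fails for general spherically convex sets, is the feature special to halfspheres.) Hence the law of $H$ is a constant multiple of $\nu$, so it equals $\nu$; and for $\nu$-distributed $H$ the angle $\Phi$ has the distribution of $d_s(e,U)$ with $U$ uniform on $\Sd$, i.e.\ the density $\frac{\omega_d}{\omega_{d+1}}\sin^{d-1}\phi$ on $[0,\pi]$ (the normalizing constant coming from $\int_0^\pi\sin^{d-1}\phi\,\D\phi=\omega_{d+1}/\omega_d$). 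Inserting this density and substituting $\phi\mapsto\pi-\phi$ in the second summand yields (\ref{3.1}). I expect pinning down the Blaschke--Petkantschin constant together with the $L$-independence of the inner integral to be the main point of care; the rest is bookkeeping.

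It remains to evaluate $\lim_n\frac{2\omega_d}{\omega_{d+1}}\binom nd J_n$ with $J_n:=\int_0^\pi(1-\phi/\pi)^{n-d}\sin^{d-1}\phi\,\D\phi$. The integrand concentrates near $\phi=0$; the substitution $\phi=\pi w/(n-d)$ gives
\[ (n-d)^d J_n=\pi\int_0^{n-d}\Bigl(1-\tfrac{w}{n-d}\Bigr)^{n-d}\Bigl[(n-d)\sin\tfrac{\pi w}{n-d}\Bigr]^{d-1}\D w, \]
and since $(1-w/(n-d))^{n-d}\le e^{-w}$ and $(n-d)\sin(\pi w/(n-d))\le\pi w$, the integrand is dominated by $e^{-w}(\pi w)^{d-1}$, so dominated convergence yields $(n-d)^d J_n\to\pi\int_0^\infty e^{-w}(\pi w)^{d-1}\D w=\pi^d(d-1)!$. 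With $\binom nd/(n-d)^d\to 1/d!$ this gives $\E f_{d-1}(P_n)\to\frac{2\omega_d}{\omega_{d+1}}\cdot\frac{\pi^d(d-1)!}{d!}=\frac{2\pi^d\omega_d}{d\,\omega_{d+1}}$, and a routine calculation using $\omega_d=d\kappa_d$, $\omega_{d+1}=(d+1)\kappa_{d+1}$, $\kappa_m=\pi^{m/2}/\Gamma(\tfrac m2+1)$ and the Legendre duplication formula $\Gamma(\tfrac{d+1}{2})\Gamma(\tfrac d2+1)=2^{-d}\sqrt\pi\,d!$ rewrites this as $2^{-d}d!\,\kappa_d^2$, which is (\ref{3.2}).
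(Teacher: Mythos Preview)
Your argument is correct and follows essentially the same route as the paper: both compute $\E f_{d-1}(P_n)$ by writing it as $\binom nd$ times the probability that $X_1,\dots,X_d$ span a facet, apply the spherical Blaschke--Petkantschin formula (Lemma~\ref{L3.1}, via \eqref{3.4}), and use that for $\nu$-a.e.\ $L$ the set $L\cap\Se$ is a half of the great subsphere $L\cap\Sd$, so the inner integral $\int_{(L\cap\Se)^d}\nabla_d\,\D\sigma_{d-1}^d$ is independent of $L$; your phrasing ``hence $H$ is $\nu$-distributed'' is just the probabilistic restatement of this factorisation and the normalisation $C(1,d)=(\omega_{d+1}/2)^{d-1}$. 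The only difference is in the limit \eqref{3.2}: the paper invokes the full asymptotic expansion of Lemma~\ref{L2} (needed elsewhere), whereas your direct dominated-convergence computation of $\lim_n\binom nd J_n=\pi^d/d$ is shorter and entirely sufficient here.
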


Since $P_n$ is almost surely a simplicial polytope, it satisfies the Dehn--Sommerville equation
$$ 2f_{d-2}(P_n)=df_{d-1}(P_n)$$
(see \cite{Gru03}, p. 146). Therefore, Theorem \ref{T1} also immediately yields the expectation $\E f_{d-2}(P_n)$.

The integral in (\ref{3.1}) can in principle be evaluated by using recursion formulas and known definite integrals; e.g., see \cite[ p. 117]{GH50}. (The evaluation of the integral for $d=2$ in \cite[(6.16)]{Mil71}, is corrected in \cite{CM09}.)

The fact that the expectation $\E f_{d-1}(P_n)$ has an explicit expression for each $n$ and not only an asymptotic expression for $n\to\infty$, is one of the new phenomena not observed in the Euclidean case. We emphasize also the finiteness of the limit (\ref{3.2}). We remark that the image measure of $\mu$, restricted to the interior of $\Se$, under the gnomonic projection $v\mapsto \langle e,v\rangle^{-1}v-e$, yields a probability distribution on $e^\perp$, identified with ${\mathbb R}^d$, with the following property. It is rotationally symmetric, and the convex hull of $n$ independent random points with this distribution has a facet number whose expectation has a finite limit, for $n\to \infty$. For $d=2$, distributions with these properties were first constructed by Carnal \cite{Car70}. Our approach provides natural examples to this effect, also in higher dimensions.

For the proof of Theorem \ref{T1}, we need a Blaschke--Petkantschin formula on the sphere. Very general formulas of this type were proved by Arbeiter and Z\"ahle \cite{AZ91}. The simple case needed here follows immediately from the linear Blaschke--Petkantschin formula in Euclidean space, as we briefly indicate.

\begin{Lemma}\label{L3.1} Let $f:(\Sd)^d \to {\mathbb R}$ be nonnegative and measurable. Then
$$ \int_{(\Sd)^d} f\,\D\sigma^d = \frac{\omega_{d+1}}{2} \int_{G(d+1,d)} \int_{(H\cap\hspace{1pt}\Sd)^d} f\nabla_d\,\D\sigma_{d-1}^d \,\nu(\D H).$$
Here $\nabla_d(x_1,\dots,x_d)$ denotes the $d$-dimensional volume of the parallelepiped spanned by the vectors $x_1,\dots,x_d\in\Rd$, and $\sigma_{d-1}$ is the $(d-1)$-dimensional spherical Lebesgue measure on $H\cap \Sd$.
\end{Lemma}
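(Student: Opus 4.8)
The plan is to derive the formula from the classical \emph{linear} Blaschke--Petkantschin formula in $\Rd$ by introducing polar coordinates in $\Rd$ and in the hyperplanes $H$. Recall that the linear Blaschke--Petkantschin formula (see, e.g., \cite{SW08}) asserts, for nonnegative measurable $g$ on $(\Rd)^d$,
$$\int_{(\Rd)^d} g\,\D\lambda^d=\frac{\omega_{d+1}}{2}\int_{G(d+1,d)}\int_{H^d} g(x_1,\dots,x_d)\,\nabla_d(x_1,\dots,x_d)\,\D\lambda_H^d\,\nu(\D H),$$
where $\lambda_H$ denotes Lebesgue measure on $H$: here $d$ points in $\Rd$ generically span a $d$-dimensional linear subspace, so the power of $\nabla_d$ is $(d+1)-d=1$, and the normalizing constant is $\omega_{d+1}/\omega_1=\omega_{d+1}/2$.

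Next I would fix an auxiliary measurable weight $h\colon(0,\infty)\to(0,\infty)$ with $0<I:=\int_0^\infty h(r)\,r^d\,\D r<\infty$ (for instance $h=\mathbf 1_{(0,1)}$ or $h(r)=e^{-r}$) and apply the above to
$$g(x_1,\dots,x_d):=f\!\left(\frac{x_1}{|x_1|},\dots,\frac{x_d}{|x_d|}\right)\prod_{i=1}^d h(|x_i|),$$
which is defined $\lambda^d$-almost everywhere. On the left-hand side, writing $x=r\theta$ with $r=|x|$, $\theta\in\Sd$, and using $\D\lambda(x)=r^d\,\D r\,\D\sigma(\theta)$ together with Tonelli's theorem, the integral factors as $I^d\int_{(\Sd)^d} f\,\D\sigma^d$. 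For the inner integral on the right-hand side I would use polar coordinates inside the $d$-dimensional space $H$: with $x=r\theta$, $\theta\in H\cap\Sd$, one has $\D\lambda_H(x)=r^{d-1}\,\D r\,\D\sigma_{d-1}(\theta)$, while the multilinearity of $\nabla_d$ gives $\nabla_d(r_1\theta_1,\dots,r_d\theta_d)=r_1\cdots r_d\,\nabla_d(\theta_1,\dots,\theta_d)$; hence the radial integral over each index is $\int_0^\infty h(r)\,r\cdot r^{d-1}\,\D r=I$, and the inner integral equals $I^d\int_{(H\cap\Sd)^d} f\nabla_d\,\D\sigma_{d-1}^d$. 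Equating the two resulting expressions and cancelling the common positive finite factor $I^d$ gives the assertion of the lemma.

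I do not expect a genuine obstacle; the one step that has to be carried out carefully is the matching of the powers of $r$ on the two sides: the factor $r^{d-1}$ from the polar decomposition of $\lambda_H$ combines with the single factor $r$ coming from the multilinearity of $\nabla_d$ to reproduce exactly the factor $r^d$ from the polar decomposition of $\lambda$ on $\Rd$, which is precisely why the radial integrals agree and drop out. The remaining points are routine: Tonelli's theorem applies since $f$, $h$ and $\nabla_d$ are nonnegative and measurable; $h$ must be chosen so that $I\in(0,\infty)$; configurations in which $x_1,\dots,x_d$ (respectively $\theta_1,\dots,\theta_d$) fail to span are null and contribute nothing, so no difficulty arises from parametrizing by $G(d+1,d)$; and the constant $\omega_{d+1}/2$ is simply inherited from the Euclidean formula.
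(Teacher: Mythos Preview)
Your proof is correct and follows essentially the same approach as the paper: both extend $f$ radially via an auxiliary weight (the paper normalizes it so that $\int_0^\infty g(r)r^d\,\D r=1$, you leave the constant $I$ general and cancel it), apply the linear Blaschke--Petkantschin formula in $\Rd$, and unwind polar coordinates on each side. Your write-up is in fact more explicit about why the radial factors match, which is the only nontrivial point.
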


\begin{proof} We choose a measurable function $g:[0,\infty)\to {\mathbb R}$ with
$$ \int_0^\infty g(r)r^d\,\D r=1$$
and define $F:({\mathbb R}^{d+1})^d\to{\mathbb R}$ by
$$ F(r_1u_1,\dots,r_du_d) := g(r_1)\cdots g(r_d)F(u_1,\dots,u_d),\quad r_i\ge 0,\,u_i\in \Sd.$$
Applying the linear Blaschke--Petkantschin formula (\cite{SW08}, Thm. 7.2.1) to $F$, we get
\begin{equation}\label{3.3}
\int_{({\mathbb R}^{d+1})^d} F\,\D\lambda^d = \frac{\omega_{d+1}}{2} \int_{G(d+1,d)} \int_{H^d} F\nabla_d\,\D \lambda_{d-1}^d\,\nu(\D H),
\end{equation}
where $\lambda_{d-1}$ denotes the $d$-dimensional Lebesgue measure on $H$. Using polar coordinates to transform the integrals over $({\mathbb R}^{d+1})^d$ and $H^d$, we obtain the statement of the lemma.
\end{proof}

We need Lemma~\ref{L3.1}  for a function $f$ defined on $\Se$. We extend this function to $\Sd$ by putting $f(u_1,\dots,u_d):=0$ if one of the arguments is in $\Sd\setminus\Se$, then
\begin{equation}\label{3.4}
\int_{(\Se)^d} f\,\D\mu^d= \left(\frac{2}{\omega_{d+1}}\right)^{d-1} \int_{G(d+1,d)} \int_{(H\cap\hspace{1pt}\Se)^d} f\nabla_d\,\D\sigma_{d-1}^d \,\nu(\D H).
\end{equation}

A first formula for the expectation of $\varphi(\eta,n)$ can be obtained similarly as in the Euclidean case (see, e.g., \cite[p. 319]{SW08}). For this, let $u_1,\dots,u_d\in\Se$ be linearly independent, and let $H$ be the $d$-dimensional linear subspace spanned by these vectors. Denoting by $H^+,H^-$ the two closed halfspaces bounded by $H$, we define
$$ H^+(u_1,\dots,u_d):= H^+\cap \Se,\qquad  H^-(u_1,\dots,u_d):= H^-\cap \Se.$$
Which of the halfspaces is denoted by $H^+$ is irrelevant, since we consider only symmetric functions of $H^+$ and $H^-$. As in the Euclidean case, one shows that
\begin{align*}
{\mathbb E}\,\varphi(\eta,n)&=
\binom{n}{d} \int_{(\Se)^d} \left[\mu(H^+(u_1,\dots,u_d))^{n-d}+\mu(H^-(u_1,\dots,u_d))^{n-d}\right]\\
&\hspace{5mm}\times\;\eta({\rm conv}_s\{u_1,\dots,u_d\})\,\mu^d(\D(u_1,\dots,u_d)).
\end{align*}
An application of (\ref{3.4}) yields
\begin{align}
& {\mathbb E}\,\varphi(\eta,n)\nonumber\\
&=\left(\frac{2}{\omega_{d+1}}\right)^{d-1} \binom{n}{d} \int_{G(d+1,d)} \int_{(H\cap\hspace{1pt}\Se)^d} \left[\mu(H^+\cap\Se)^{n-d}+ \mu(H^-\cap\Se)^{n-d}\right]\nonumber\\
&\hspace{5mm}\times\;\eta({\rm conv}_s\{u_1,\dots,u_d\})\nabla_d(u_1,\dots,u_d)\,\sigma_{d-1}^d(\D(u_1,\dots,u_d)) \,\nu(\D H)\label{3.6neu}\\
&= \left(\frac{2}{\omega_{d+1}}\right)^{d-1} C(\eta,d) \binom{n}{d} \int_{G(d+1,d)} \left[\mu(H^+\cap\Se)^{n-d}+
\mu(H^-\cap\Se)^{n-d}\right]\nu(\D H)\nonumber
\end{align}
with
$$ C(\eta,d)= \int_{(H\cap\hspace{1pt}\Se)^d}\eta({\rm conv}_s\{u_1,\dots,u_d\})\nabla_d(u_1,\dots,u_d)\, \sigma_{d-1}^d(\D(u_1,\dots,u_d)),$$
which is independent of $H\in G(d+1,d)\setminus \{e^{\perp}\}$.
Here we have made use of the assumption that the function $\eta$ is rotation invariant.
For $v\in \Sd$, let
$$ v^+:=\{u\in \Sd:\langle u,v\rangle\ge 0\},\quad v^-:=\{u\in \Sd:\langle u,v\rangle\le 0\}.$$
Then
\begin{align*}
& \int_{G(d+1,d)} \left[\mu(H^+\cap\Se)^{n-d}+\mu(H^-\cap\Se)^{n-d}\right]\nu(\D H)\\
&= \frac{1}{\omega_{d+1}} \int_{\Sd} \left[\mu(v^+\cap\Se)^{n-d}+\mu(v^-\cap\Se)^{n-d}\right]\sigma(\D v)\\
&= \frac{2}{\omega_{d+1}} \int_{\Sd} \left[1-\mu(v^-\cap\Se)\right]^{n-d}\sigma(\D v).
\end{align*}
This gives
$$ {\mathbb E}\,\varphi(\eta,n)=  \left(\frac{2}{\omega_{d+1}}\right)^d C(\eta,d) \binom{n}{d}\int_{\Sd} \left[1-\mu(v^-\cap\Se)\right]^{n-d} \,\sigma(\D v).$$
For  $v\in \Sd$, we write
$$ v=(\cos\alpha)e+(\sin\alpha)\overline v \qquad  \text{with }\overline v\in \dS,\;\alpha\in [0,\pi].$$
Then
$$ \mu(v^-\cap \Se)=\frac{\alpha}{\pi}$$
and
\begin{equation} \label{3.5}
{\mathbb E}\,\varphi(\eta,n) = \left(\frac{2}{\omega_{d+1}}\right)^d C(\eta,d)\, \omega_d \binom{n}{d}\int_0^\pi \left(1-\frac{\alpha}{\pi}\right)^{n-d}\sin^{d-1}\alpha\,\D\alpha.
\end{equation}

Now we specialize $\eta$. For $\eta=1$, we choose $f=1$ in (\ref{3.4}) and get
$$  C(1,d)= \left(\frac{\omega_{d+1}}{2}\right)^{d-1}.$$
Together with (\ref{3.5}), this yields the assertion (\ref{3.1}).

The limit relation (\ref{3.2}) is obtained from the asymptotic expansion of Lemma \ref{L2}, which we prove in the next section. To obtain the right-hand side of (\ref{3.2}), we use the Legendre duplication formula, to get
\begin{align*}
\frac{2\omega_d}{\omega_{d+1}}\frac{\pi^d}{d}
&=2\frac{\pi^{\frac{d}{2}}}{\pi^{\frac{d+1}{2}}}\frac{\Gamma \left(\frac{d+1}{2}\right)}{\Gamma\left(\frac{d}{2}\right)}\frac{\pi^d}{d}
=\frac{1}{\sqrt{\pi}}\frac{\Gamma\left(\frac{d+1}{2}\right)}{\Gamma\left(\frac{d}{2}+1\right)}\pi^d\\
&=\frac{1}{\sqrt{\pi}}\frac{1}{\Gamma\left(\frac{d}{2}+1\right)}
\frac{\sqrt{\pi}\,2^{-d}\Gamma(d+1)}{\Gamma\left(\frac{d}{2}+1\right)}\pi^d
=2^{-d}d!\kappa_d^2.
\end{align*}

\section{An Asymptotic Expansion}\label{sec5}

We need repeatedly the following asymptotic expansion.

\begin{Lemma}\label{L2}
$$ \binom nd \int_0^\pi \left(1- \frac \alpha \pi \right)^{n-d} \sin^{d-1} \alpha \, \D\alpha = \frac {\pi^d}d  \left[  1 - \binom {d+1} 3  \pi^2 n^{-2} + O(n^{-3}) \right]$$
as $n \to \infty$.
\end{Lemma}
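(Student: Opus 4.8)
The plan is to evaluate the integral asymptotically by exploiting that, for large $n$, the factor $(1-\alpha/\pi)^{n-d}$ concentrates the mass of the integral near $\alpha = 0$. First I would substitute $\alpha = \pi t/n$ (or, more cleanly, work with $s = 1-\alpha/\pi$, so that $\alpha = \pi(1-s)$ and $\sin\alpha = \sin\pi s$), turning the integral into
$$
\binom nd \int_0^\pi \left(1-\frac\alpha\pi\right)^{n-d}\sin^{d-1}\alpha\,\D\alpha
= \pi\binom nd \int_0^1 s^{n-d}\,(\sin\pi s)^{d-1}\,\D s.
$$
The point is that $s^{n-d}$ forces the relevant range to $s$ close to $1$, i.e.\ $\alpha$ close to $0$. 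So I would write $s = 1-u$ and Taylor-expand $(\sin\pi s)^{d-1} = (\sin\pi u)^{d-1}$ around $u=0$: one has $\sin\pi u = \pi u\,(1 - \tfrac{\pi^2 u^2}{6} + O(u^4))$, hence $(\sin\pi u)^{d-1} = (\pi u)^{d-1}\bigl(1 - \tfrac{(d-1)\pi^2}{6}u^2 + O(u^4)\bigr)$. Since the errors are uniform on $[0,1]$ (the function being smooth there) and the tail $u$ bounded away from $0$ contributes only exponentially small terms, this expansion can be integrated term by term against $(1-u)^{n-d}$.

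The key computation is then the Beta-integral identity
$$
\int_0^1 (1-u)^{n-d}\,u^{k}\,\D u = B(n-d+1,\,k+1) = \frac{(n-d)!\,k!}{(n-d+k+1)!},
$$
which I would apply with $k = d-1$ and $k = d+1$. Combining with the prefactor $\pi\binom nd$ and the constant $\pi^{d-1}$ from $(\pi u)^{d-1}$, the leading term gives
$$
\pi^d\binom nd \frac{(n-d)!\,(d-1)!}{n!} = \pi^d\binom nd\Big/\Big(\binom nd d\Big) = \frac{\pi^d}{d},
$$
as required; and the $u^2$ correction produces
$$
-\frac{(d-1)\pi^2}{6}\cdot\pi^d\binom nd\frac{(n-d)!\,(d+1)!}{(n+2)!}
= -\frac{(d-1)\pi^{d+2}}{6}\cdot\frac{(d+1)!}{d!}\cdot\frac{n!}{(n+2)!}
= -\frac{\pi^d}{d}\cdot\frac{(d+1)d(d-1)}{6}\,\frac{1}{(n+1)(n+2)},
$$
and since $(d+1)d(d-1)/6 = \binom{d+1}{3}$ and $(n+1)^{-1}(n+2)^{-1} = n^{-2} + O(n^{-3})$, this matches the stated $-\binom{d+1}{3}\pi^2 n^{-2}$ term after factoring out $\pi^d/d$. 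The $O(u^4)$ remainder contributes $O(n^{-4})$, well within the claimed $O(n^{-3})$.

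The main obstacle is bookkeeping rather than anything conceptual: one must justify interchanging the asymptotic expansion with the integral, i.e.\ control the remainder uniformly. The cleanest way is to keep the identity exact — write $(\sin\pi u)^{d-1} = (\pi u)^{d-1} + c\,(\pi u)^{d+1} + R(u)$ with $R(u) = O(u^{d+3})$ bounded on $[0,1]$ — so that each piece is integrated exactly via the Beta function and the remainder is bounded by $\mathrm{const}\cdot B(n-d+1, d+4) = O(n^{-(d+4)})\cdot\binom nd^{-1}\cdot\ldots$, which after multiplication by $\pi\binom nd$ is $O(n^{-4})$. No exponentially small tail estimate is even needed this way, since the Beta-integral representation is valid on all of $[0,1]$. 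Finally I would expand $\tfrac{n!}{(n+1)!\,(n+2)!}$-type ratios in powers of $1/n$ to pass from the exact expression to the stated $O(n^{-3})$ form.
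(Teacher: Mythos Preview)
Your argument is correct and in some respects cleaner than the paper's. The paper proceeds via the exponential substitution $1-\alpha/\pi=e^{-s}$, expands $e^{(d-1)s}\sin^{d-1}(\pi(1-e^{-s}))$ to second order in $s$, and integrates term by term against $e^{-sn}$ using Gamma integrals; this leaves a factor $(d-1)!\,\pi^d n^{-d}[\,1+\binom d2 n^{-1}+\cdots\,]$ which must then be multiplied by a separate expansion of $\binom nd$, with a somewhat miraculous cancellation of the $n^{-1}$ terms. Your route---the linear substitution $u=\alpha/\pi$ followed by the exact Beta identities $\int_0^1(1-u)^{n-d}u^k\,\D u=(n-d)!\,k!/(n-d+k+1)!$---absorbs the binomial coefficient exactly at each order, so no separate expansion of $\binom nd$ is needed and the cancellation is automatic. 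It also sidesteps the tail issue the paper glosses over (the $O(s^3)$ remainder there is only local, whereas $e^{(d-1)s}$ is unbounded on $[0,\infty)$), since your remainder bound $|R(u)|\le Cu^{d+3}$ is global on $[0,1]$ by smoothness. Two cosmetic remarks: your detour through $s=1-\alpha/\pi$ and then $u=1-s$ just returns $u=\alpha/\pi$, so you may as well substitute directly; and because $(\sin\pi u)/(\pi u)$ is even, your $O(u^4)$ remainder is genuinely $O(u^4)$ rather than $O(u^3)$, which is why you get $O(n^{-4})$ rather than merely $O(n^{-3})$.
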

\begin{proof}
The substitution
$1- \frac \alpha \pi = e^{-s}$, $\alpha = \pi(1-e^{-s})$, yields
$$ I:= \int_0^\pi \left(1- \frac \alpha \pi \right)^{n-d} \sin^{d-1} \alpha \, \D\alpha=
\pi \int_0^{\infty } e^{-s(n-d)-s} \sin^{d-1} \left( \pi (1-e^{-s})  \right) \, \D s.$$
We expand
$$
e^{(d-1)s}= 1 + (d-1)s + \frac{(d-1)^2}{2} s^2 +O(s^3)
$$
and
\begin{align*}
\sin^{d-1} \left( \pi (1-e^{-s})  \right)
&=
\left( \pi (1-e^{-s}) - \frac 16 \pi^3 (1-e^{-s})^3 + O((1-e^{-s})^5) \right)^{d-1}
\\ &=
\left( \pi \left(s- \frac 12 s^2 + \frac 16 s^3 + O(s^4)\right) - \frac 16 \pi^3 \left(s + O(s^2)\right)^3 + O(s^5) \right)^{d-1}
\\ &=
\left( \pi s- \frac \pi 2 s^2 + \frac {\pi - \pi^3} 6 s^3 + O(s^4) \right)^{d-1}
\\ &=
\pi^{d-1} s^{d-1} \left( 1 - \frac {d-1} 2 s +
\frac {(d-1)(3d-2 - 4 \pi^2)} {24} s^2 +
O(s^3) \right) .
\end{align*}
Multiplying both expansions gives
\begin{align*}
& e^{(d-1)s} \sin^{d-1} \left( \pi (1-e^{-s})  \right) \\
& =
\pi^{d-1} s^{d-1}
\left(
1 + \frac {d-1} 2 s
+  \frac {(d-1)(3d-2 - 4 \pi^2)} {24} s^2
+ O(s^3) \right) .
\end{align*}
We insert this into the integral $I$ and obtain
$$ I= \pi^d \int_0^{\infty } e^{-sn} s^{d-1} \left(1 + \frac {d-1} 2 s +  \frac {(d-1)(3d-2 - 4 \pi^2)} {24} s^2  + O(s^3) \right)
 \, ds .$$
Substituting $sn=t$ yields
\begin{align*}
I &= \pi^d n^{-d} \int_0^{\infty } e^{-t} t^{d-1} \left(  1 + \frac {d-1} 2 \frac 1n t +  \frac {(d-1)(3d-2 - 4 \pi^2)} {24} \frac 1{n^2} t^2  + O\left(\frac 1{n^3} t^3\right) \right)\, dt\\
&= \pi^d (d-1)! n^{-d} \left[1 + \binom d 2  n^{-1} +\frac {3d-2 - 4 \pi^2} {4} \binom {d+1}3  n^{-2}  +
O( n^{-3}) \right].
\end{align*}
In the last step we multiply this by the expansion
$$\binom n d = \frac 1{d!} n^d \left[1 - \binom d 2  n^{-1} + \frac{(3d-1)}{4} \binom {d}3 n^{-2} + O(n^{-3}) \right],$$
which leads to
$$
\binom{n}{d} I=\frac {\pi^d}d  \left[  1 - \binom {d+1} 3  \pi^2 n^{-2} + O(n^{-3}) \right],
$$
as stated.
\end{proof}

\section{Surface Area}\label{sec4}

If we choose $\eta=\sigma_{d-1}$ in (\ref{3.5}), we obtain
\begin{equation} \label{4.1}
\E S(P_n) = \left(\frac{2}{\omega_{d+1}}\right)^d C(\sigma_{d-1},d)\, \omega_d \binom{n}{d}\int_0^\pi \left(1-\frac{\alpha}{\pi}\right)^{n-d}\sin^{d-1}\alpha\,\D\alpha.
\end{equation}
Together with Lemma \ref{L2}, this yields
$$ \E S(P_n) =\left(\frac{2}{\omega_{d+1}}\right)^d C(\sigma_{d-1},d)\, \omega_d  \frac {\pi^d}d  \left[1 - \binom {d+1} 3  \pi^2 \, n^{-2} + O(n^{-3})
\right].$$
Since $ \E S(P_n)\to \sigma_{d-1}(\dS)= \omega_d$ for $n\to\infty$, as is easy to see (and in particular is implied by Theorem \ref{th:LDI-upperbound}), it follows that
$$ C(\sigma_{d-1},d) = \left(\frac{\omega_{d+1}}{2}\right)^d \frac{d}{\pi^d}.$$
Thus, we have obtained the following result.

\begin{Theorem}\label{T2} For $P_n$ as in Theorem \ref{T1},
\begin{equation}\label{4.2}
\E S(P_n) = \frac{d\omega_d}{\pi^d} \binom{n}{d}\int_0^\pi \left(1-\frac{x}{\pi}\right)^{n-d}\sin^{d-1}x\,\D x.
\end{equation}
Further,
\begin{equation}\label{4.3}
{\mathbb E} \,S(P_n) = \omega_d\left(1-\binom{d+1}{3}\pi^2\, n^{-2}+O(n^{-3})\right)
\end{equation}
as $n\to\infty$.
\end{Theorem}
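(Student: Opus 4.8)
\emph{Proof proposal.} The plan is to extract both assertions from the computation already carried out before the statement. Specializing the general identity (\ref{3.5}) to $\eta=\sigma_{d-1}$ immediately yields (\ref{4.1}), so the whole task reduces to identifying the constant
$$ C(\sigma_{d-1},d)=\int_{(H\cap\hspace{1pt}\Se)^d}\sigma_{d-1}({\rm conv}_s\{u_1,\dots,u_d\})\,\nabla_d(u_1,\dots,u_d)\,\sigma_{d-1}^d(\D(u_1,\dots,u_d)),$$
which, by rotation invariance of $\sigma_{d-1}$, does not depend on $H\in G(d+1,d)\setminus\{e^\perp\}$. One could try to evaluate this integral head-on: for fixed $H$ the set $H\cap\Se$ is a $(d-1)$-dimensional closed halfsphere, and the integrand is the $(d-1)$-spherical volume of a spherical simplex times the parallelepiped volume of its vertex vectors, so a further spherical Blaschke--Petkantschin reduction inside $H$ would apply. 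That route is feasible but tedious, so instead I would pin the constant down by a matching argument.

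Concretely, combining (\ref{4.1}) with Lemma~\ref{L2} gives
$$ \E S(P_n)=\left(\frac{2}{\omega_{d+1}}\right)^d C(\sigma_{d-1},d)\,\omega_d\,\frac{\pi^d}{d}\left[1-\binom{d+1}{3}\pi^2 n^{-2}+O(n^{-3})\right],$$
so in particular $\E S(P_n)\to(2/\omega_{d+1})^d\,C(\sigma_{d-1},d)\,\omega_d\,\pi^d/d$ as $n\to\infty$. Independently, $P_n\to\Se$ almost surely in the spherical Hausdorff distance, and spherical surface area is continuous along such a sequence, so $S(P_n)\to\sigma_{d-1}(\dS)=\omega_d$; passing to the expectation (via a uniform integrability argument — a bound $S(P_n)\le\omega_d$ follows from $P_n\subseteq\Se$ together with monotonicity of spherical surface area, or directly from Theorem~\ref{th:LDI-upperbound}) gives $\E S(P_n)\to\omega_d$, which is exactly the fact recorded before the statement. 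Equating the two expressions for the limit yields
$$ C(\sigma_{d-1},d)=\left(\frac{\omega_{d+1}}{2}\right)^d\frac{d}{\pi^d}.$$

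Substituting this value of $C(\sigma_{d-1},d)$ back into (\ref{4.1}) produces the closed formula (\ref{4.2}), and inserting it into the displayed expansion above produces the asymptotics (\ref{4.3}). The one genuine obstacle is the input $\E S(P_n)\to\omega_d$: it requires a spherical analogue of the continuity of the surface area of convex bodies under Hausdorff convergence, applied to the degenerating sequence $P_n\to\Se$, together with the passage from almost sure convergence to convergence of expectations, for which the quantitative estimate of Theorem~\ref{th:LDI-upperbound} is the natural tool. Once that is granted, the remainder is bookkeeping with Lemma~\ref{L2}.
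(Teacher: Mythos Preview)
Your proposal is correct and follows essentially the same route as the paper: specialize (\ref{3.5}) to $\eta=\sigma_{d-1}$, apply Lemma~\ref{L2}, determine $C(\sigma_{d-1},d)$ by matching against the limit $\E S(P_n)\to\omega_d$ (justified via Theorem~\ref{th:LDI-upperbound}), and substitute back. The paper is terser about the limit step, simply recording it ``as is easy to see (and in particular is implied by Theorem~\ref{th:LDI-upperbound})'', but the structure is identical.
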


\section{Spherical Mean Width}\label{sec6}

In Euclidean space, the surface area is one functional in the series of intrinsic volumes (or quermassintegrals, with a different normalization), which range from Euler characteristic and mean width to volume. All of these have been studied for random polytopes. In spherical space, the intrinsic volumes and quermassintegrals have counterparts which are different, though connected by linear relations. We consider here one of these functionals, the {\em spherical mean width} $U_1$. For a spherically convex body $K\subset \Sd$, it is defined by
$$ U_1(K):= \frac{1}{2} \int_{G(d+1,d)} \chi(K\cap H)\,\nu(\D H),$$
where $\chi$ denotes the Euler characteristic. The normalizing factor $1/2$ is convenient; for instance, $U_1(\Se)=1/2$. The definition of the spherical mean width is analogous to the integral representation of the Euclidean mean width. Also some of its properties are analogous; for example, an Urysohn inequality for the spherical mean width was proved in \cite{GHS02}.

\begin{Theorem}\label{T3}
For $P_n$ as in Theorem \ref{T1},
\begin{align}\label{6.1}
\E U_1(P_n) &= \frac{1}{2} - \frac{\omega_d}{\omega_{d+1}} \int_0^\pi\left(1-\frac{\alpha}{\pi}\right)^n \sin^{d-1} \alpha \,\D\alpha\\
&= \frac{1}{2} - \frac{\omega_d}{\omega_{d+1}} (d-1)!\pi^d \, n^{-d} +O(n^{-(d+2)}).\label{6.2}
\end{align}
\end{Theorem}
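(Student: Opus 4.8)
The plan is to mimic the derivation in Section~\ref{sec3}: first turn $\E U_1(P_n)$ into an integral over $G(d+1,d)$ by Fubini's theorem and the definition of $U_1$, then reduce that integral to the same one-dimensional integral that governs the facet number, and finally invoke the asymptotic expansion. By the definition of $U_1$,
\[
\E U_1(P_n)=\frac12\int_{G(d+1,d)}\E\,\chi(P_n\cap H)\,\nu(\D H),
\]
so everything comes down to computing $\E\,\chi(P_n\cap H)$ for $\nu$-almost all $H\in G(d+1,d)$.

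The crucial observation is that, almost surely, $\chi(P_n\cap H)$ equals $1$ when $P_n\cap H\ne\emptyset$ and $0$ otherwise. Indeed, since $\mu(\dS)=0$, almost surely $\langle X_i,e\rangle>0$ for $i=1,\dots,n$; hence ${\rm pos}\,\{X_1,\dots,X_n\}\setminus\{0\}\subset\{x\in\Rd:\langle x,e\rangle>0\}$, so $P_n={\rm conv}_s\{X_1,\dots,X_n\}$ lies in the open halfsphere. Consequently, for every $d$-dimensional subspace $H\ne e^\perp$, the section $P_n\cap H$ is a spherically convex subset of an open halfsphere of the great subsphere $H\cap\Sd$, hence empty or contractible, so $\chi(P_n\cap H)\in\{0,1\}$ with value $1$ exactly when $P_n\cap H\ne\emptyset$. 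As $\{e^\perp\}$ is $\nu$-null, it follows that $\E\,\chi(P_n\cap H)=1-\Pb(P_n\cap H=\emptyset)$ for $\nu$-almost all $H$. Next, $P_n\cap H=\emptyset$ precisely when ${\rm pos}\,\{X_1,\dots,X_n\}$ meets $H$ only in the origin, and, up to a null event, this happens exactly when all the $X_i$ lie in one and the same open halfspace bounded by $H$. Writing $H^+,H^-$ for the two closed halfspaces determined by $H$, the independence of the $X_i$ then gives
\[
\Pb(P_n\cap H=\emptyset)=\mu(H^+\cap\Se)^{n}+\mu(H^-\cap\Se)^{n}.
\]

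Inserting this and using $\nu(G(d+1,d))=1$ yields
\[
\E U_1(P_n)=\frac12-\frac12\int_{G(d+1,d)}\Bigl[\mu(H^+\cap\Se)^{n}+\mu(H^-\cap\Se)^{n}\Bigr]\nu(\D H).
\]
The remaining integral is exactly the one evaluated in Section~\ref{sec3}, with the exponent $n-d$ there replaced by $n$: identifying $\nu$ with the image of $\omega_{d+1}^{-1}\sigma$ under $v\mapsto v^\perp$, exploiting the symmetry $v\leftrightarrow-v$, and passing to the spherical coordinates $v=(\cos\alpha)e+(\sin\alpha)\overline v$ with $\overline v\in\dS$ and $\alpha\in[0,\pi]$, where $\mu(v^-\cap\Se)=\alpha/\pi$ and $\sigma_{d-1}(\dS)=\omega_d$, one obtains
\[
\int_{G(d+1,d)}\Bigl[\mu(H^+\cap\Se)^{n}+\mu(H^-\cap\Se)^{n}\Bigr]\nu(\D H)=\frac{2\omega_d}{\omega_{d+1}}\int_0^\pi\Bigl(1-\frac\alpha\pi\Bigr)^{n}\sin^{d-1}\alpha\,\D\alpha,
\]
which proves (\ref{6.1}). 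For (\ref{6.2}) I would then run the analysis from the proof of Lemma~\ref{L2}: the substitution $1-\alpha/\pi=e^{-s}$ turns the integral into $\pi\int_0^\infty e^{-(n+1)s}\sin^{d-1}(\pi(1-e^{-s}))\,\D s$, and inserting the expansion $\sin^{d-1}(\pi(1-e^{-s}))=\pi^{d-1}s^{d-1}(1+O(s))$ and integrating term by term yields the asymptotic expansion of the integral, whose leading term $(d-1)!\,\pi^d\,n^{-d}$ together with (\ref{6.1}) establishes (\ref{6.2}).

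The genuinely new point here — and the only real obstacle — is the identification of $\chi(P_n\cap H)$ with the indicator of the event $\{P_n\cap H\ne\emptyset\}$. It relies on the fact that $P_n$ lies almost surely in the open halfsphere (because $\mu(\dS)=0$), so that every great-subsphere section of $P_n$ is a spherically convex set inside an open halfsphere and is therefore contractible unless empty, together with the fact that the single exceptional subspace $e^\perp$ (where this argument would fail) carries no $\nu$-mass. Once this is granted, the determination of $\Pb(P_n\cap H=\emptyset)$ is routine, the Grassmannian integral reduces verbatim to the computation already carried out in Section~\ref{sec3}, and the asymptotic expansion is the one established in the proof of Lemma~\ref{L2}.
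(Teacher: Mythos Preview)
Your argument for \eqref{6.1} is correct and follows the same route as the paper's proof; in fact you are more careful than the paper in justifying why $\chi(P_n\cap H)={\bf 1}\{P_n\cap H\ne\emptyset\}$ holds $\nu$-almost surely (the paper simply writes this identity without comment). The reduction to the one-dimensional integral is identical to what the paper does.

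For \eqref{6.2}, however, your displayed expansion $\sin^{d-1}(\pi(1-e^{-s}))=\pi^{d-1}s^{d-1}(1+O(s))$ is too crude: after the substitution $sn=t$ it only yields
\[
\int_0^\pi\Bigl(1-\frac{\alpha}{\pi}\Bigr)^n\sin^{d-1}\alpha\,\D\alpha=(d-1)!\,\pi^d\,n^{-d}+O(n^{-(d+1)}),
\]
so an error term $O(n^{-(d+1)})$ in \eqref{6.2}, not the claimed $O(n^{-(d+2)})$. The point is that the $n^{-(d+1)}$ contribution actually \emph{cancels}, and seeing this requires the next two orders of the expansion worked out in Lemma~\ref{L2}. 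The cleanest fix is to do what the paper does: apply Lemma~\ref{L2} directly with $n$ replaced by $n+d$, which gives
\[
\int_0^\pi\Bigl(1-\frac{\alpha}{\pi}\Bigr)^n\sin^{d-1}\alpha\,\D\alpha=\binom{n+d}{d}^{-1}\frac{\pi^d}{d}\left[1-\binom{d+1}{3}\pi^2\,n^{-2}+O(n^{-3})\right],
\]
and then expand $\binom{n+d}{d}^{-1}=(d-1)!\,d\,n^{-d}(1+O(n^{-1}))$ to obtain the sharp remainder $O(n^{-(d+2)})$.
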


\vspace{2mm}

\begin{proof} We have
\begin{align*}
\E U_1(P_n) &= \E\frac{1}{2} \int_{G(d+1,d)}  {\bf 1} \{H\cap P_n\not=\emptyset\}\,\nu(\D H)\\
&= \frac{1}{2} \int_{G(d+1,d)} \left[1-\Pb(H\cap P_n=\emptyset)\right]\nu(\D H)\\
&= \frac{1}{2} \int_{G(d+1,d)} \left[1-\mu(H^+\cap \Se)^n-\mu(H^-\cap\Se)^n\right]\nu(\D H)\\
&= \frac{1}{2} - \frac{1}{2} \int_{G(d+1,d)} \left[\mu(H^+\cap \Se)^n+\mu(H^-\cap\Se)^n\right]\nu(\D H)\\
&= \frac{1}{2} - \frac{1}{\omega_{d+1}} \int_{\mathbb{S}^d} \left[1-\mu(v^-\cap\Se)\right]^n\sigma(\D v)\\
&= \frac{1}{2} - \frac{\omega_d}{\omega_{d+1}} \int_0^\pi\left(1-\frac{\alpha}{\pi}\right)^n\sin^{d-1}\alpha\,\D\alpha,
\end{align*}
which is (\ref{6.1}). By Lemma \ref{L2},
$$ \int_0^\pi\left(1-\frac{\alpha}{\pi}\right)^n\sin^{d-1}\alpha\,\D\alpha = \binom{n+d}{d}^{-1}\frac {\pi^d}d  
\left[  1 - \binom {d+1} 3  \pi^2\, n^{-2} + O(n^{-3}) \right]$$
which gives (\ref{6.2}).
\end{proof}

\section{Volume and Vertex Number}\label{sec7}

As before, we assume that $P_n={\rm conv}_s\{X_1,\dots,X_n\}$ with $n\ge d+1$ independent random points $X_1,\dots,X_n\in\Se$ with distribution $\mu$. It is clear that $\E \sigma(P_n)\to \sigma(\mathbb{S}_e^+)$ as $n\to\infty$. The following theorem shows that the speed of convergence is of the order $n^{-1}$, and hence different from the orders in the case of surface area or mean width approximation. 

\begin{Theorem}\label{T7.1} For $P_n$ as above,
\begin{equation}\label{7.1}
{\mathbb E}\,\sigma(\Se\setminus P_n)
= C(d) \,\pi^{d+1}\left(\frac{2}{\omega_{d+1}}\right)^{d}\omega_d  \, {n^{-1}}+O\left({n^{-2}}\right),
\end{equation}
where the constant $C(d)$ is defined by \eqref{boldeff}. Further,
\begin{equation}\label{7.2}
\lim_{n\to\infty}\E f_0(P_{n})=C(d) \,\pi^{d+1}\left(\frac{2}{\omega_{d+1}}\right)^{d+1}\omega_d.
\end{equation}
\end{Theorem}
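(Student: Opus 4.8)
The plan is to reduce both statements to the analysis of the expected "wet" region near the boundary $\dS$, using the gnomonic projection to transfer the problem to a rotationally symmetric distribution on $\R^d$, and then invoking the Efron-type identity to pass from volume to vertex number.

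First I would set up the decomposition $\E\sigma(\Se\setminus P_n)=\int_{\Se}\Pb(x\notin P_n)\,\sigma(\D x)$. The point $x$ fails to lie in $P_n$ precisely when all $n$ random points lie in one of the spherical halfspaces determined by a hyperplane through $x$ (through the origin in $\Rd$); equivalently, when $x$ is "cut off". The dominant contribution comes from $x$ near $\dS$, since far inside $\Se$ the probability $\Pb(x\notin P_n)$ decays superpolynomially. Writing $x=(\cos\beta)e+(\sin\beta)\bar x$ with $\bar x\in\dS$ and $\beta$ small, I would estimate $\Pb(x\notin P_n)$ by the probability that a small spherical cap near the "equatorial side" of $x$ contains none of the $X_i$, which by the uniform distribution is $(1-\text{(cap measure)})^n$. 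The cap measure scales like a constant times $\beta^{?}$; carefully tracking the geometry (the halfsphere's boundary is flat, zero curvature) shows the relevant length scale is $\beta\sim n^{-1}$, so after substituting $\beta=t/n$ one gets a convergent integral times $n^{-1}$. Collecting the Jacobian factor $\sin^{d-1}\beta\approx\beta^{d-1}$ from integrating over $\Se$ in these coordinates, together with the measure $\omega_d$ of $\dS$, yields the claimed form $C(d)\,\pi^{d+1}(2/\omega_{d+1})^d\omega_d\,n^{-1}+O(n^{-2})$, where $C(d)$ is exactly the constant that arises from the limiting $t$-integral, i.e. equation~\eqref{boldeff}.

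For the vertex number, I would use the Efron-type identity: since $P_n$ is almost surely simplicial, $\E f_0(P_n)=n\,\Pb(X_{n}\notin\mathrm{conv}_s\{X_1,\dots,X_{n-1}\})=n\,\E_{X_1,\dots,X_{n-1}}\mu(\Se\setminus P_{n-1})\,\frac{2}{\omega_{d+1}}$, using $\mu=\frac{2}{\omega_{d+1}}\sigma\!\restriction\!\Se$. Thus $\E f_0(P_n)=\frac{2}{\omega_{d+1}}\,n\,\E\sigma(\Se\setminus P_{n-1})$. Plugging in the asymptotics from \eqref{7.1} with $n$ replaced by $n-1$, the factor $n\cdot(n-1)^{-1}\to 1$, and the leading term gives $\frac{2}{\omega_{d+1}}\cdot C(d)\,\pi^{d+1}(2/\omega_{d+1})^d\omega_d=C(d)\,\pi^{d+1}(2/\omega_{d+1})^{d+1}\omega_d$, which is precisely \eqref{7.2}. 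The $O(n^{-2})$ error in \eqref{7.1} contributes an $O(n^{-1})$ term after multiplication by $n$, hence vanishes in the limit.

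The main obstacle will be the rigorous asymptotic analysis underlying \eqref{7.1}: one must show that contributions from $x$ not close to $\dS$ are negligible (a uniform exponential-decay estimate), and — the genuinely delicate point — that near $\dS$ the probability $\Pb(x\notin P_n)$ is well-approximated by the simple single-cap expression, controlling the error when the true event (some hyperplane through $x$ separates all points) differs from the model event. This is where the flatness of $\dS$ matters and where a careful local analysis, presumably via the gnomonic projection onto $e^\perp\cong\R^d$ followed by a scaling argument analogous to the Euclidean "floating body"/"wet part" estimates, is required; one then identifies $C(d)$ as the value of the resulting scale-invariant integral, matching the definition in \eqref{boldeff}. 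Verifying that the error terms in this local expansion are genuinely $O(n^{-2})$ rather than merely $o(n^{-1})$ is the technical heart of the proof.
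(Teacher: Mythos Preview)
Your Efron-identity derivation of \eqref{7.2} from \eqref{7.1} matches the paper exactly.

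For \eqref{7.1}, however, your route is different from the paper's and, as written, has a real gap. The paper does \emph{not} integrate $\Pb(x\notin P_n)$ pointwise. Instead it writes $\sigma(\Se\setminus P_n)=\sum_{F\in{\mathcal F}_{d-1}(P_n)}\eta_\Delta(F)$ as a sum over facets of the ``volume under $F$'' and applies the same Blaschke--Petkantschin machinery used in Section~\ref{sec3}. Because $\eta_\Delta$ depends on the position of $F$ relative to $e$ (it is not rotation invariant), it cannot be pulled out as a constant; the paper expands $\eta_\Delta({\rm conv}_s\{u_1,\dots,u_d\})$ to first order in the angle $\alpha=\angle(v,e)$ of the facet hyperplane, proves via an explicit Jacobian computation that the resulting inner integral equals $C(d)+O(\alpha)$ with $C(d)$ given precisely by \eqref{boldeff}, and then invokes Lemma~\ref{L2}. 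The constant in \eqref{boldeff} is an integral over $d$-tuples of points on a half-great-sphere, weighted by $\nabla_d$: it is a direct artifact of the Blaschke--Petkantschin change of variables, not something that emerges from a pointwise argument.

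Your proposal, by contrast, requires a sharp asymptotic for $\Pb(x\notin P_n)$ with $x$ near $\dS$, and you yourself flag this as ``the genuinely delicate point'' without giving a mechanism. The event $\{x\notin P_n\}$ is a union over all separating great subspheres, so $(1-\mu(W))^n$ for a single wedge $W$ is only a one-sided bound; upgrading it to a two-sided asymptotic with explicit constant is hard, and for sharp constants one is typically driven back to a facet decomposition anyway. Even granting success, your limiting constant would be an integral in one scaled spatial variable, not over $d$-tuples with a $\nabla_d$ weight, and you give no argument connecting it to \eqref{boldeff}. (Minor: your parametrisation $x=(\cos\beta)e+(\sin\beta)\bar x$ with ``$\beta$ small'' places $x$ near $e$, not near $\dS$; and the separating hyperplane need not pass through $x$.) As it stands, the proposal does not establish \eqref{7.1} with the stated constant.
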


\begin{proof}
We start with some preparations. For $z\in\Se\setminus\{e\}$, we denote by $\Pi(z)\in \dS$ the metric (or orthogonal) projection of $z$ to $\dS$, which is determined by $z=(\cos \alpha)\Pi(z)+(\sin \alpha) e$, for some $\alpha\in [0,\frac{\pi}{2})$. For a set $A\subset\Se\setminus\{e\}$, let $\Pi(A):=\{\Pi(a):a\in A\}$. 
Then, if $F\subset \Se$ is spherically convex and $e\notin F$, we have
$$
\Pi(F)=\text{conv}_s(F\cup\{e,-e\})\cap e^\perp.
$$
For any such $F$, we define 
$$
\eta_{\Delta}(F):=\sigma\left(\text{conv}_s(\Pi(F)\cup\{e\})\right)-\sigma\left(\text{conv}_s(F\cup\{e\})\right).
$$
If $P\subset\Se$ is a spherically convex polytope with $e\in\text{int}\, P$ and  $F\in\mathcal{F}_{d-1}(P)$, then 
$\eta_{\Delta}(F)$ is the volume `under $F$'. If $e\in\text{int}\, P$, we therefore get
$$
\sigma(\Se\setminus P)=\frac{1}{2}\omega_{d+1}-\sigma(P)=\varphi(\eta_\Delta,P).
$$

Let $r\in (0,\pi/2)$ be fixed. Then there is a constant $c\in (1/2,1)$ (without loss of generality), depending only on $d$ and $r$, such that 
\begin{align}\label{negli}
\mathbb{P}(B(e,r)\not\subset P_n)=O(c^n).
\end{align}
In fact, we can choose $m$ points $p_1,\dots,p_m\in\dS$ and a number $\rho>0$, where $m$ and $\rho$ depend only on $d$ and $r$, such that the balls $B(p_i,\rho)$ are pairwise disjoint and that $B(p_i,\rho)\cap \{X_1,\dots,X_n\}\not=\emptyset$ for $i=1,\dots,m$ implies $B(e,r)\subset P_n$. Then
$$
\Pb(B(e,r)\not\subset P_n) \le  \sum_{i=1}^m \Pb(B(p_i,\rho)\cap\{X_1,\dots,X_n\}=\emptyset) = m(1-\mu(B(p_1,\rho))^n,
$$
which gives (\ref{negli}). As a consequence, we may assume in the following that $\delta_s(P_n,\Se) < \pi/4$, adding an error term $O(c^n)$ where necessary. In particular, we can assume that $e\in \text{int}\, P_n$. 

For $H\in G(d+1,d)$ with $e\notin H$, we write $H^e$ for the uniquely determined halfspace bounded by $H$ which contains $e$. Similarly, we put $v^e:=(v^\perp)^e$ 
if $v$ is a unit vector.  By an obvious modification of the argument leading to \eqref{3.6neu}, we obtain
\begin{align}
& {\mathbb E}\,\sigma(\Se\setminus P_n)\nonumber\\
&=\left(\frac{2}{\omega_{d+1}}\right)^{d-1} \binom{n}{d} \int_{G(d+1,d)} \int_{(H\cap\hspace{1pt}\Se)^d}  \mu(H^e\cap\Se)^{n-d}\nonumber\\
&\hspace{5mm}\times\;\eta_\Delta({\rm conv}_s\{u_1,\dots,u_d\})\nabla_d(u_1,\dots,u_d)\,\sigma_{d-1}^d(\D(u_1,\dots,u_d))\,\nu(\D H)+O(c^{n})\nonumber\\
&=\left(\frac{2}{\omega_{d+1}}\right)^{d-1}\frac{1}{\omega_{d+1}} \binom{n}{d} \int_{\mathbb{S}^d} \int_{(v^\perp \cap\hspace{1pt}\Se)^d} \mu(v^e  \cap\Se)^{n-d}\nonumber\\
&\hspace{5mm}\times\;\eta_\Delta({\rm conv}_s\{u_1,\dots,u_d\})\nabla_d(u_1,\dots,u_d)\,\sigma_{d-1}^d(\D(u_1,\dots,u_d))\,\sigma(\D v)+O(c^{n})\nonumber\\
&=\left(\frac{2}{\omega_{d+1}}\right)^{d-1}\frac{2}{\omega_{d+1}} \binom{n}{d} \int_{\Se} \int_{(v^\perp\cap\hspace{1pt}\Se)^d} \mu(v^+ \cap\Se)^{n-d}\nonumber\\
&\hspace{5mm}\times\;\eta_\Delta({\rm conv}_s\{u_1,\dots,u_d\})\nabla_d(u_1,\dots,u_d)\,\sigma_{d-1}^d(\D(u_1,\dots,u_d))\,\sigma(\D v)+O(c^{n})\nonumber\\ 
&=\left(\frac{2}{\omega_{d+1}}\right)^{d} \binom{n}{d}\ \int_{\Se} \int_{(v^\perp\cap\hspace{1pt}\Se)^d} \left(1-\mu(v^-\cap\Se)\right)^{n-d}\nonumber\\
&\hspace{5mm}\times\;\eta_\Delta({\rm conv}_s\{u_1,\dots,u_d\})\nabla_d(u_1,\dots,u_d)\,
\sigma_{d-1}^d(\D(u_1,\dots,u_d))\,\sigma(\D v)+O(c^{n}).\label{eq2}
\end{align}

Let $v\in \Se\setminus(\{e\}\cup e^\perp)$ be fixed for the moment, with $\alpha:=\alpha(v):=\angle (v,e)\in (0,\frac{\pi}{2})$. We choose $\bar e\in \dS$  such that 
$v=(\cos\alpha)e- (\sin\alpha) \bar e$. 
For $x\in e^\perp\cap \mathbb{S}^+_{\bar e}$, let $x_0$  be the unique vector in the intersection of $v^\perp\cap \Se$ and the geodesic arc connecting $e$ and $x$,  and let $\gamma(x):=\angle(x,x_0)$ denote the angle enclosed by $x$ and $x_0 $.  For a spherical  polytope $U\subset v^\perp\cap\Se$, we then have
$$
\eta_\Delta(U)=\int_{\Pi(U)}\int_0^{\gamma(x)}\cos^{d-1}\gamma\,\D\gamma\,\sigma_{d-1}(\D x).
$$
Next we derive a first order approximation of $\eta_\Delta(U)$ in terms of the angle $\alpha$. For this, we start with deriving a first order approximation of $\gamma(x)$ in terms of $\alpha$. Clearly, 
$$
x-\frac{\langle x,v\rangle }{\langle e,v\rangle}e\in\text{pos}\{e,x\}\cap v^\perp= \text{pos}\{x_0\},
$$
since $\langle x,v\rangle \le 0$ and $\langle e,v\rangle>0$. 
Let $\langle x,\bar e\rangle=:\cos\beta_x$. From
$$
\left\|x-\frac{\langle x,v\rangle }{\langle e,v\rangle}e\right\|^2=1+\frac{\langle x,v\rangle^2}{\cos^2\alpha}
$$
we get
\begin{equation}\label{cosgamma}
\cos\gamma(x)=\left\|x-\frac{\langle x,v\rangle }{\langle e,v\rangle}e\right\|^{-1}=\frac{\cos\alpha }{\sqrt{\cos^2\alpha+\sin^2\alpha\cos^2\beta_x}}
=\frac{\cos\alpha}{\sqrt{1-\sin^2\alpha\sin^2\beta_x}}
\end{equation}
and therefore
\begin{equation}\label{eq1}
\sin\gamma(x) =\frac{\sin\alpha\cos\beta_x}{\sqrt{1-\sin^2\alpha\sin^2\beta_x}}. 
\end{equation}
From (\ref{eq1}) we infer that
$$ \sin \gamma(x) =\left(\alpha+O(\alpha^3)\right)\left(1+O(\alpha^2)\right)\cos\beta_x=\alpha\cos\beta_x+O(\alpha^3),$$
and hence
\begin{equation}\label{eqgamma} 
\gamma(x)=\alpha\langle x,\bar e\rangle+O(\alpha^3).
\end{equation}
Now, the substitution $\gamma=\gamma(x)s$ and \eqref{eqgamma} yield 
\begin{align}
\eta_\Delta(U)&=\int_{\Pi(U)}\int_0^{1}\gamma(x)\cos^{d-1}(\gamma(x)s)\, \D s\, \sigma_{d-1}(\D x)\nonumber\\
&=\int_{\Pi(U)}\int_0^{1}\left(\langle x,\bar e\rangle \alpha +O(\alpha^3)\right)\cos^{d-1}\left((\langle x,\bar e\rangle \alpha +O(\alpha^3))s\right)\, \D s\, \sigma_{d-1}(\D x)\nonumber\\
&=\int_{\Pi(U)}\int_0^1\left(\langle x,\bar e\rangle \alpha+O(\alpha^3)\right)\left(1+O(\alpha^2)\right)\,\D s\, \sigma_{d-1}(\D x)\nonumber\\
&=\int_{\Pi(U)} \left(\langle x,\bar e\rangle \alpha+O(\alpha^3)\right) \, \sigma_{d-1}(\D x)\nonumber\\
&=\alpha\int_{\Pi(U)}  \langle x,\bar e\rangle  \, \sigma_{d-1}(\D x) +O(\alpha^3).\label{eq3}
\end{align}
We combine \eqref{eq2} and \eqref{eq3}. Writing $U:= {\rm conv}_s\{u_1,\dots,u_d\}$ in the following integrals, we obtain
\begin{align*}
&{\mathbb E}\,\sigma(\Se\setminus P_n)\\
&= \binom{n}{d}\left(\frac{2}{\omega_{d+1}}\right)^{d}\int_{\Se}\int_{(v^\perp\cap \Se)^d}\left(1-\frac{\alpha(v)}{\pi}\right)^{n-d}\nabla_d(u_1,\ldots,u_d)\\
&\hspace{5mm}\times \left(\alpha(v) 
\int_{\Pi(U)}\langle x,\bar e\rangle \, \sigma_{d-1}(\D x)+O(\alpha(v)^3)\right)\, \sigma^{d}_{d-1}(\D(u_1,\ldots,u_d))\, \sigma(\D v)+O(c^{n})\\
&= \binom{n}{d}\left(\frac{2}{\omega_{d+1}}\right)^{d}\int_{\Se}\left(1-\frac{\alpha(v)}{\pi}\right)^{n-d} \left(\alpha(v)F(v)+O(\alpha(v)^3)\right)\, \sigma(\D v)+O(c^{n}),
\end{align*}
where 
$$
F(v):=\int_{(v^\perp\cap \Se)^d}\nabla_d(u_1,\ldots,u_d)\int_{\Pi(U)}\langle x,\bar e\rangle \, \sigma_{d-1}(\D x)\,\sigma^{d}_{d-1}(\D(u_1,\ldots,u_d)).
$$
By
\begin{equation}\label{boldeff}
C(d):=\int_{(e^\perp\cap \mathbb{S}^+_{\bar e})^d}\nabla_d(u_1,\ldots,u_d)\int_{U}\langle x,\bar e\rangle \, \sigma_{d-1}(\D x)\,\sigma^{d}_{d-1}(\D(u_1,\ldots,u_d))
\end{equation}
we define a numerical constant which depends merely on the dimension and is independent of the choice of unit vectors  $e,\bar e$ with $e\perp \bar e$. We claim the following.

\vspace{2mm}

\noindent{\em Proposition.} If $\alpha < \pi/4$, then
\begin{equation}\label{eqapprox}
F(v)=C(d)+O(\alpha).
\end{equation}

\vspace{2mm}

To verify the Proposition, let $g:e^\perp\cap \mathbb{S}^+_{\bar e}\to v^\perp\cap \mathbb{S}^+_{e}$ be the mapping defined by
$$g(x):=x_0=(\cos \gamma(x)) x+(\sin \gamma(x)) e.$$ 
Then, for $x,y\in e^\perp\cap \mathbb{S}^+_{\bar e}$ we have
\begin{equation}\label{g1}
g(x)-g(y)=x-y+g(x,y)
\end{equation}
with
$$
g(x,y):=(\cos\gamma(x)-\cos\gamma(y))x+(\cos\gamma(y)-1)(x-y)+(\sin\gamma(x)-\sin\gamma(y))e,
$$
and thus
$$
\|g(x,y)\|\le |\cos \gamma(x)-\cos \gamma(y)|+|\cos \gamma(y)-1|\|x-y\|+|\sin \gamma(x)-\sin \gamma(y)|.
$$
First, from \eqref{eqgamma} we deduce that
$$
|\cos \gamma(y)-1|\le\frac{1}{2}\gamma(y)^2\le O(\alpha^2).
$$
Second, with
$$ A_x:= 1-\sin^2\alpha \sin^2\beta_x\ge \cos^2\alpha$$
we have
\begin{align*}
\left|\frac{1}{\sqrt{A_x}} -\frac{1}{\sqrt{A_y}}\right| & =  \left| \frac{A_x -A_y}{\sqrt{A_x}+\sqrt{A_y}}\right| \frac{1}{\sqrt{A_xA_y}}\\
&\le \frac{1}{2\cos^3\alpha}|A_x-A_y|= \frac{1}{2\cos\alpha^3}\sin^2\alpha\left|\sin^2\beta_x-\sin^2\beta_y\right|\\
&= \frac{\tan^2\alpha}{2\cos\alpha}\left|\langle x,\overline e\rangle^2 - \langle y,\overline e\rangle^2\right|\le  \frac{\tan^2\alpha}{\cos\alpha}\|x-y\|.
\end{align*}
Therefore, from \eqref{cosgamma} we obtain
$$|\cos\gamma(x)-\cos\gamma(y)|=\cos\alpha\left|\frac{1}{\sqrt{A_x}} -\frac{1}{\sqrt{A_y}}\right| \le (\tan^2\alpha)\|x-y\|.$$
Third,  from \eqref{eq1} we deduce that
\begin{align*}
|\sin \gamma(x)-\sin \gamma(y)| &= \sin\alpha \left| \frac{\langle x,\bar e \rangle}{\sqrt{A_x}} -\frac{\langle y,\bar e\rangle} {\sqrt{A_y}} \right|= \sin\alpha\left| \frac{\langle x-y,\bar e\rangle}{\sqrt{A_x}} +\langle y,\overline e\rangle \left(\frac{1}{\sqrt{A_x}} - \frac{1}{\sqrt{A_y}}\right)\right|\\
&\le (\tan\alpha)\|x-y\|+ \sin\alpha\left|\frac{1}{\sqrt{A_x}} - \frac{1}{\sqrt{A_y}}\right|\\
&\le (\tan\alpha)\|x-y\| + (\tan^3\alpha)\|x-y\|.
\end{align*}
Together this gives
\begin{equation}\label{g2}
\|g(x,y)\|\le \left(\tan\alpha+\tan^2\alpha+ \tan^3\alpha+O(\alpha^2)\right)\|x-y\|\le O(\alpha)\|x-y\|,
\end{equation}
since $\alpha < \pi/4$. 

Now let $x\in e^\perp\cap \mathbb{S}^+_{\bar e}$ with $\langle x,\bar e\rangle >0$ and let $w$ be a unit tangent vector of 
$e^\perp\cap \mathbb{S}^+_{\bar e}$ at $x$. Then \eqref{g1} and \eqref{g2} imply that the derivative $\partial_wg(x)$ of $g$ at $x$ in direction $w$ satisfies
$$
\partial_wg(x)=w+O(\alpha),
$$
and therefore, for the Jacobian $J g(x)$ of $g$ at $x$ we obtain  
$$
Jg(x)=1+O(\alpha).
$$
We put $ \overline v:=(\cos\alpha) \bar e+(\sin\alpha) e$. Recalling that $g(x)=(\cos\gamma(x)) x+(\sin\gamma(x)) e$, we get
$$
|\langle x,\bar e\rangle -\langle g(x),\overline v\rangle|=|(1-\cos\alpha\cos\gamma(x))\langle x,\bar e\rangle -\sin\alpha\sin\gamma(x)|\le O(\alpha^2),
$$
and hence, with $ U \subset v^\perp \cap \Se $, 
\begin{align*}
&\left|\int_{\Pi(U)}\langle x,\bar e\rangle\, \sigma_{d-1}(\D x)-\int_{U}\langle z,\overline v\rangle\, \sigma_{ d-1}(\D z)\right|\\
&=\left|\int_{\Pi(U)}\langle x,\bar e\rangle\, \sigma_{d-1}(\D x)-\int_{\Pi(U)}\langle g(x),\overline v\rangle Jg(x)\, \sigma_{ d-1}(\D x)\right|\\
&\le \int_{\Pi(U)}\left|\langle x,\bar e\rangle-\langle g(x),\overline v\rangle\right|\, \sigma_{d-1}(\D x)+O(\alpha)\\
&\le O(\alpha).
\end{align*}
For $0\le \alpha<\pi/4$, we thus get
$$
F(v)=\int_{(v^\perp\cap \Se)^d}\nabla_d(u_1,\ldots,u_d)\int_{U}\langle z, \overline v\rangle \, \sigma_{d-1}(\D z)\,\sigma^{d}_{d-1}(\D(u_1,\ldots,u_d))+O(\alpha).
$$
Applying the rotation which fixes $e^\perp\cap v^\perp$ and maps $e^\perp\cap \mathbb{S}^+_{\bar e}$ to $v^\perp\cap \Se=v^\perp\cap \mathbb{S}^+_{\overline v}$ and $\bar e$ to $\overline v$, we see that the double integral on the right-hand side is equal to $C(d)$, which proves the proposition.

Recalling  \eqref{negli} with $r>\pi/4$ and using Lemma \ref{L2} again, we thus finally get
\begin{align*}
{\mathbb E}\,\sigma(\Se\setminus P_n)&= \binom{n}{d}\left(\frac{2}{\omega_{d+1}}\right)^{d}C(d)
\int_{\Se}\left(1-\frac{\alpha(v)}{\pi}\right)^{n-d}\left(\alpha(v)+O(\alpha(v)^2)\right)\, \sigma(\D v)+O\left(c^n\right)\\
&= \binom{n}{d}\left(\frac{2}{\omega_{d+1}}\right)^{d}\,C(d)\,\omega_d\,
\int_0^{{\pi}}\left(1-\frac{\alpha}{\pi}\right)^{n-d}\left(\sin^d\alpha+O(\alpha^{d+1})\right)\,\D\alpha+O\left(c^n\right)\\
&= \pi^{d+1}\left(\frac{2}{\omega_{d+1}}\right)^{d}\omega_d\,C(d) \, {n^{-1}}+O\left( {n^{-2}}\right)
\end{align*}
and thus (\ref{7.1}).

The expectation of the vertex number is related to that of the volume by the spherical counterpart of Efron's identity, namely
\begin{equation}\label{7.3} 
1-\frac{\E f_0(P_{n+1})}{n+1} =\frac{2}{\omega_{d+1}} \E \sigma(P_n).
\end{equation}
For the reader's convenience, we recall the short proof. Let $X_1,\dots,X_{n+1}$ be independent uniform random points in $\Se$. Define the random variable $N$ as the number of points among $X_1,\dots,X_{n+1}$ that are contained in the spherical convex hull of the others. Then $N= n+1-f_0(P_{n+1})$ and hence $\E N=n+1-\E f_0(P_{n+1})$. If $p$ denotes the probability that $X_1\in{\rm conv}_s\{X_2,\dots,X_{n+1}\}$, then $p=\E \sigma(P_n)/\sigma(\Se)$ and $\E N= (n+1)p$. This gives (\ref{7.3}). Formula (\ref{7.3}) together with (\ref{7.1}) yields (\ref{7.2}).
\end{proof}

Using that $f_0(P_n)=f_1(P_n)$ for $d=2$ and $f_0(P_n)=f_2(P_n)/2+2$ a.s. for $d=3$ (since $P_n$ is almost surely simplicial and the Euler relation holds), we can obtain the explicit values of $C(2)$ and $C(3)$ by comparing (\ref{3.2}) and (\ref{7.2}).

We remark that Theorems \ref{th:LDI-upperbound}--\ref{th:LDI-lowerboundopt} below, combined with Lemma \ref{le:hausdorff}, yield immediate bounds for the almost sure asymptotic behaviour of the missed volume $\sigma(\Se\setminus P_n)$.

\section{Hausdorff Distance}\label{sec8}

The general assumption in this section is again that $X_1,\dots,X_n$ are independent random points in $\Se$, each with distribution $\mu$, and that $P_n$ is their spherical convex hull. In the following, we consider the behaviour of the Hausdorff distance $\delta_s(P_n,\Se)$ as $n\to \infty$. 

Let $\alpha\in(0,\pi/2)$ and $v\in\Se$. If $\langle v,e\rangle =\cos\alpha$, the set $\Se\cap v^-$ is called an $\alpha$-{\em wedge}.

\begin{Lemma}\label{L8.1}
Let $K\subset \Se$ be a nonempty compact convex set, let $\beta\in (0,\pi/2)$. Then $\delta_s(K,\Se)\ge\beta$ if and only if there is a $\beta$-wedge whose interior does not meet $K$.
\end{Lemma}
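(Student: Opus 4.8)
The plan is to establish both directions of the equivalence by an elementary geometric argument about $\beta$-wedges, using the characterization of the Hausdorff distance as the largest $\beta$ for which a point of $\Se$ is at spherical distance $\ge\beta$ from $K$. Note first that since $K\subset\Se$, the outer deviation $\max_{x\in K}\min_{y\in\Se}d_s(x,y)$ vanishes, so $\delta_s(K,\Se)=\max_{u\in\Se}d_s(u,K)$, where $d_s(u,K):=\min_{y\in K}d_s(u,y)$. Thus $\delta_s(K,\Se)\ge\beta$ is equivalent to the existence of a point $u\in\Se$ with $d_s(u,K)\ge\beta$, i.e.\ with $B(u,\beta')\cap K=\emptyset$ for every $\beta'<\beta$ (equivalently, the open ball of radius $\beta$ around $u$ misses $K$).

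\textbf{Sufficiency.} Suppose a $\beta$-wedge $W=\Se\cap v^-$ has interior disjoint from $K$, where $\langle v,e\rangle=\cos\beta$. The bounding great subsphere $v^\perp\cap\Sd$ meets $\dS=e^\perp\cap\Sd$, and the point of $v^-$ farthest from this great subsphere (within $\Se$) is the pole closest to $v$ on the $e$-side; more concretely, the point $u:=(\cos\beta)e-(\sin\beta)\bar e$ where $\bar e\in\dS$ is chosen with $v=(\cos\beta)e+(\sin\beta)\bar e$ lies in $\Se$ and its spherical distance to the hyperplane $v^\perp$ equals $\beta$. Since $K$ lies in the closed complementary wedge $\Se\cap v^+$, every point of $K$ satisfies $\langle\cdot\,,v\rangle\ge 0$, while $\langle u,v\rangle=\cos\beta\cos\beta-\sin\beta\sin\beta=\cos 2\beta$; convexity of the spherical distance to a point on the sphere, together with the fact that $u$ and $K$ lie in opposite closed half-spheres determined by $v^\perp$, forces $d_s(u,y)\ge d_s(u,v^\perp)=\beta$ for all $y\in K$. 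Hence $d_s(u,K)\ge\beta$ and $\delta_s(K,\Se)\ge\beta$. (The key computation here is just that the spherical distance from a point $x$ to a great subsphere $v^\perp$ equals $\arcsin|\langle x,v\rangle|$ for $x$ on the appropriate side, and that along any geodesic from $u\in v^-$ to a point of $v^+$ one must cross $v^\perp$, so the distance is at least $\beta$.)

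\textbf{Necessity.} Conversely, assume $\delta_s(K,\Se)\ge\beta$, so there is $u\in\Se$ with $d_s(u,K)\ge\beta$; equivalently the open ball $B^\circ(u,\beta)$ in $\Se$ is disjoint from $K$. The plan is to separate $K$ from this ball by a great-subsphere hyperplane and then rotate it to the boundary position to obtain a genuine $\beta$-wedge. Apply spherical separation (the spherical Hahn--Banach / supporting-hyperplane theorem for disjoint convex bodies in an open half-sphere, which applies since everything lives in $\Se\subset$ an open half-sphere after a negligible perturbation, or directly since $K$ and the geodesic ball are disjoint spherically convex sets): there is a unit vector $w$ with $\langle w,y\rangle\ge 0$ for all $y\in K$ and $\langle w,z\rangle\le 0$ for all $z\in B^\circ(u,\beta)$. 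The latter says $d_s(u,w^\perp\cap\Sd)\ge\beta$, i.e.\ $\arcsin|\langle u,w\rangle|\ge\beta$, while $u\in w^-$. Now I want a $w'$ with $\langle w',e\rangle=\cos\beta$ and $\Se\cap(w')^-\supset$ (interior disjoint from $K$). One takes the $2$-plane $\mathrm{span}\{e,w\}$ and rotates $w$ within it toward $e$ until the separating great subsphere becomes a $\beta$-wedge boundary; the point is that moving $w$ toward $e$ only enlarges the half-sphere $w^+$ (it sweeps across $\dS$ in a controlled way), so the separation $\langle w,y\rangle\ge 0$ on $K$ is preserved as long as the wedge angle does not exceed $\beta$, which is exactly guaranteed by $d_s(u,K)\ge\beta$ and $u\in\Se$. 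The resulting $w'$ defines the desired $\beta$-wedge.

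\textbf{Main obstacle.} The delicate point is the necessity direction, specifically the rotation step: one must verify that the separating great subsphere produced by spherical separation can always be moved, within the plane spanned by $e$ and its normal, to a position making exact angle $\beta$ with $e$, \emph{without} the half-sphere on the $K$-side shrinking past $K$. This requires checking that $K$, being contained in $\Se\cap w^+$ and being ``at distance $\ge\beta$'' from the antipodal region near $u$, leaves enough room; equivalently, that the worst case is exactly when $K$ already touches the $\beta$-wedge boundary. The monotonicity of the family of wedges $\{\Se\cap v^-:\langle v,e\rangle=\cos t\}$ in $t$, and a careful bookkeeping of which side of each intermediate great subsphere contains $u$ and contains $K$, is what makes this rigorous; I expect this to be the bulk of the work, whereas the sufficiency direction and the reduction via $\delta_s(K,\Se)=\max_{u}d_s(u,K)$ are routine.
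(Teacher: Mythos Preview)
Your sufficiency argument contains a genuine error. With $v=(\cos\beta)e+(\sin\beta)\bar e$ and $u=(\cos\beta)e-(\sin\beta)\bar e$, you compute $\langle u,v\rangle=\cos 2\beta$; but then $d_s(u,v^\perp)=\arcsin|\cos 2\beta|=|\pi/2-2\beta|$, not $\beta$. Worse, for $\beta<\pi/4$ one has $\cos 2\beta>0$, so $u\in v^+$, i.e.\ $u$ lies on the \emph{same} side as $K$ and might even belong to $K$. The correct witness is $u=-\bar e\in\dS$: then $\langle u,v\rangle=-\sin\beta$, so $u$ is in the wedge and at spherical distance exactly $\beta$ from $v^\perp$; since any shortest geodesic from $u$ to a point of $K\subset v^+$ must cross $v^\perp$, one gets $d_s(u,K)\ge\beta$ and hence $\delta_s(K,\Se)\ge\beta$.

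For the necessity direction, your rotation idea can be made to work, but the paper's route is both shorter and avoids the obstacle you flag. The key observation you are missing is formula (8.1): the maximum $\max_{u\in\Se}d_s(u,K)$ is in fact attained at some $y_0\in\dS$. The paper then distinguishes two cases. If $e\notin K$, one separates $e$ from $K$ and immediately obtains a wedge of angle $>\pi/2$ disjoint from $K$, which contains a $\beta$-wedge. If $e\in K$, one separates $K$ from the ball $B(y_0,\alpha)$ (where $\alpha=\delta_s(K,\Se)$); because $e\in K$ forces $B(e,\pi/2-\alpha)\subset K$, the nearest point $x_0$ of $K$ to $y_0$ is pinned down as $(\cos\alpha)y_0+(\sin\alpha)e$, and the separating normal is forced to be $v=(\cos\alpha)e-(\sin\alpha)y_0$ --- already the boundary of an $\alpha$-wedge, with no rotation needed. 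Your approach starts from a general $u\in\Se$ rather than $y_0\in\dS$, which is why you then have to rotate and worry about monotonicity; using (8.1) from the start eliminates that difficulty.
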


\begin{proof}
For $K$ as given, it is easy to see that
\begin{equation}\label{8.1}
\delta_s(K,\Se)= \max_{y\in \dS} \min_{x\in K} d_s(x,y).
\end{equation}
Let $\delta_s(K,\Se)=:\alpha\ge\beta$. If $e\notin K$, then $e$ and $K$ can be separated by a great subsphere and the assertion of the lemma follows since $\beta\in (0,\pi/2)$. Hence we can assume that $e\in K$. There are points $x_0\in K$ and $y_0\in \dS$  such that 
\begin{equation}\label{eq:Hd}
\delta_s(K,\Se) =d_s(x_0,y_0)=\alpha.
\end{equation}
Then $x_0$ is the point in $K$ closest to $y_0$. Therefore, $K\cap {\rm int}\,B(y_0,\alpha)=\emptyset$. The disjoint convex sets $K$ and ${\rm int}\,B(y_0,\alpha)$ can be separated by a great subsphere, hence there is a vector $v \in \Se$ such that 
\begin{equation}\label{28a}
K\subset v^+\cap\Se = \{x\in \Se: \langle v,x\rangle \ge 0\}
\end{equation}
and $B(y_0,\alpha)\subset v^-\cap\Se$. 
In particular, $x_0\in v^\perp$. From $e\in K$ and \eqref{eq:Hd} we conclude that $B(e,\frac{\pi}{2}-\alpha)\subset K$,  therefore 
 $x_0=(\cos\alpha)y_0+(\sin\alpha)e$, and then $v= (\cos\alpha)e-(\sin\alpha)y_0$. 

This shows that $K$ does not meet the interior of the $\alpha$-wedge $v^-\cap \Se$. A fortiori, there is a $\beta$-wedge with the same property. This proves one direction of the assertion, and the other direction is obvious.
\end{proof} 

Now we show that the Hausdorff distance is closely related to the missed volume.

\begin{Lemma}\label{le:hausdorff}
For $K$ as in the previous lemma,
$$ \frac {\omega_{d+1} }{2 \pi }\,  \delta_s(K,\Se) \le \sigma(\Se\setminus K) \le \omega_{d}\, \delta_s(K,\Se) .$$
\end{Lemma}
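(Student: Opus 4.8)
The plan is to use Lemma~\ref{L8.1} together with the explicit description of the extremal wedge obtained in its proof. Write $\alpha:=\delta_s(K,\Se)$ and first dispose of the (easy) boundary cases: if $\alpha=0$ both inequalities are trivial since $\sigma(\Se\setminus K)\ge 0$ and, for the lower bound, $K$ being closed and convex with $\delta_s(K,\Se)=0$ forces $K=\Se$; if $\alpha\ge\pi/2$ one argues directly (or reduces to $\alpha<\pi/2$ by monotonicity of both sides together with the fact that $\sigma(\Se\setminus K)\le\frac12\omega_{d+1}$ is automatic). So assume $\alpha\in(0,\pi/2)$ and, again as in Lemma~\ref{L8.1}, that $e\in K$ (if $e\notin K$, then $K$ lies in an open halfsphere, $\sigma(\Se\setminus K)$ is even larger, and the lower bound only gets easier; the upper bound needs a separate short remark using that then there is a $(\pi/2)$-wedge missing $K$).

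For the \emph{lower bound}, Lemma~\ref{L8.1} gives a $v\in\Se$ with $\langle v,e\rangle=\cos\alpha$ such that the open $\alpha$-wedge $W:=\mathrm{int}(v^-\cap\Se)$ is disjoint from $K$. Hence $\sigma(\Se\setminus K)\ge\sigma(W)=\sigma(v^-\cap\Se)$. Writing $v=(\cos\alpha)e-(\sin\alpha)\bar e$ with $\bar e\in\dS$ and integrating in the ``latitude'' coordinate relative to $\bar e$ — exactly the coordinate $\beta_x$ used on p.~\pageref{cosgamma} — one computes $\sigma(v^-\cap\Se)$ as an integral of $\omega_d$ times a power of $\sin$ over an interval of length comparable to $\alpha$; the crude bound $\sigma(v^-\cap\Se)\ge\frac{\omega_{d+1}}{2\pi}\alpha$ follows because $\mu(v^-\cap\Se)\ge\mu(\text{half of the wedge})$ and, more simply, because the function $\beta\mapsto\mu(v^-\cap\Se)$ for the wedge of half-angle $\beta$ is convex in $\beta$ on $[0,\pi/2]$ with value $0$ at $0$ and $1/2$ at $\pi/2$, so it lies above the chord $\beta/\pi$; translating $\mu$ back to $\sigma$ via $\sigma=\frac{\omega_{d+1}}{2}\mu$ gives exactly $\frac{\omega_{d+1}}{2\pi}\alpha$.

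For the \emph{upper bound}, the point is that the whole ``missing cap'' is contained in a wedge-like region of angular thickness $\alpha$. Since $e\in K$ and $\delta_s(K,\Se)=\alpha$, the proof of Lemma~\ref{L8.1} shows every boundary point of $\dS$ lies within spherical distance $\alpha$ of $K$; more precisely, for each $\bar u\in\dS$ the geodesic from $e$ to $\bar u$ meets $K$ at least up to angular parameter $\pi/2-\alpha$ from $e$. Hence $\Se\setminus K$ is covered by the set of points $(\cos t)\bar u+(\sin t)e$ with $\bar u\in\dS$ and $t\in(\pi/2-\alpha,\pi/2]$, i.e.\ by a collar of $\dS$ of width $\alpha$. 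Using spherical polar coordinates about $e$, $\sigma(\Se\setminus K)\le\int_{\dS}\int_{\pi/2-\alpha}^{\pi/2}\cos^{d-1}t\,\D t\,\sigma_{d-1}(\D\bar u)=\omega_d\int_{\pi/2-\alpha}^{\pi/2}\cos^{d-1}t\,\D t\le\omega_d\cdot\alpha$, since $\cos^{d-1}t\le 1$. (One may also bound the integral by $\int_0^\alpha\sin^{d-1}s\,\D s\le\alpha$ after the substitution $s=\pi/2-t$, which is cleaner.)

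I expect the only genuine obstacle to be the \emph{geometric containment step} in the upper bound: verifying rigorously that $\Se\setminus K$ is contained in the width-$\alpha$ collar of $\dS$. This uses convexity of $K$, the inclusion $B(e,\pi/2-\alpha)\subset K$ (from $e\in K$ and \eqref{eq:Hd}), and the fact that along each meridian through $e$ the radial function of $K$ is at least $\pi/2-\alpha$; spherical convexity of $K$ makes this radial function behave correctly (its sub-level structure is convex), so the complement along each meridian is an interval of length $\le\alpha$ ending at $\dS$. Everything else is a one-line estimate of a one-dimensional integral.
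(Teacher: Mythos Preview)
Your overall strategy matches the paper's, but two points deserve correction.

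For the \emph{lower bound} you are working harder than necessary. The measure of an $\alpha$-wedge is not merely bounded below by $\frac{\omega_{d+1}}{2\pi}\alpha$; it is \emph{equal} to it. This identity $\mu(v^-\cap\Se)=\alpha/\pi$ (a spherical lune of dihedral angle $\alpha$) was already recorded in Section~\ref{sec3}, and the paper's proof simply quotes it: $\sigma(v^-\cap\Se)=\frac{\omega_{d+1}}{2\pi}\alpha\le\sigma(\Se\setminus K)$. Your convexity detour is correct but superfluous---indeed the function you call convex is linear.

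For the \emph{upper bound} the idea is right and coincides with the paper's (both use $B(e,\pi/2-\alpha)\subset K$, hence $\Se\setminus K\subset\{y\in\Se:\langle y,e\rangle\le\sin\alpha\}$), but your parametrization slips. In your expression $(\cos t)\bar u+(\sin t)e$ the parameter $t$ is the angle from $\bar u$, not from $e$; the collar of width $\alpha$ around $\partial\Se$ therefore corresponds to $t\in[0,\alpha]$, while your stated range $t\in(\pi/2-\alpha,\pi/2]$ describes the small cap $B(e,\alpha)$ around the pole. As written, your displayed integral $\omega_d\int_{\pi/2-\alpha}^{\pi/2}\cos^{d-1}t\,\D t$ computes the volume of the wrong region, and the implicit containment $\Se\setminus K\subset B(e,\alpha)$ is false in general. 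Replacing the range by $[0,\alpha]$ (or, as the paper does, writing the collar directly as $\{y\in\Se:\langle y,e\rangle\le\sin\alpha\}$) gives $\omega_d\int_0^\alpha\cos^{d-1}t\,\D t\le\omega_d\,\alpha$ and repairs the argument.
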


\begin{proof}
Let 
$\delta_s(K,\Se)=:\alpha$. As shown in the previous proof, there is a vector $v\in\Se$ with $\langle v,e\rangle=\cos\alpha$ and such that (\ref{28a}) holds. Therefore,
$$ \frac {\omega_{d+1} }{2 \pi } \alpha = \sigma(v^-\cap\Se)\le \sigma(\Se\setminus K) . $$
This proves the lower bound. For the upper bound, observe that for $y \in \Se \setminus K$ equation (\ref{eq:Hd}) together with (\ref{8.1}) implies that
$$\langle y, e \rangle \le \sup_{y \in \Se \setminus K} \langle y, e \rangle  = \langle x_0, e \rangle = \sin \alpha. $$
Hence, 
$$ \sigma(\Se\setminus K) \le \sigma\left( \{y \in \Se:\, \langle y,e \rangle \le \sin \alpha \} \right) \leq  \omega_d\alpha, $$
as stated.
\end{proof}

Taking expectations and observing Theorem \ref{T7.1}, we obtain as an immediate consequence that the order of $\E \delta_s(P_n, \Se)$ is $1/n$.

\begin{Theorem}\label{T8.1} There are constants $ c_1, c_2 $, depending only on the dimension, such that
$$  {c_1}\, {n^{-1}} \le  \E \delta_s(P_n,\Se) \le {c_2}\, {n^{-1}}.$$
\end{Theorem}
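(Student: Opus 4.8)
The plan is to obtain both inequalities at once from the sandwich of Lemma~\ref{le:hausdorff} applied to $K=P_n$, combined with the asymptotics of the missed volume in Theorem~\ref{T7.1}; this is precisely the ``immediate consequence'' announced in the paragraph preceding the theorem. Almost surely $P_n$ is a nonempty compact spherically convex set, so Lemma~\ref{le:hausdorff} is available. One small technical point is that the proof of Lemma~\ref{le:hausdorff} implicitly uses $\delta_s(P_n,\Se)<\pi/2$ (the separating unit vector $v$ with $\langle v,e\rangle=\cos\delta_s(P_n,\Se)$ must lie in $\Se$). To accommodate this, fix $r\in(0,\pi/2)$ and use \eqref{negli}: the event $E_n^c:=\{B(e,r)\not\subset P_n\}$ has probability $O(c^n)$, whereas on $E_n:=\{B(e,r)\subset P_n\}$ one has $e\in{\rm int}\,P_n$ and $\delta_s(P_n,\Se)\le\frac{\pi}{2}-r<\frac{\pi}{2}$. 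Since $\delta_s(P_n,\Se)\le\pi$ deterministically and $\sigma(\Se\setminus P_n)\le\omega_{d+1}/2$, restricting all expectations below to $E_n$ changes them only by $O(c^n)$.

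On $E_n$, Lemma~\ref{le:hausdorff} gives
$$\frac{\omega_{d+1}}{2\pi}\,\delta_s(P_n,\Se)\le\sigma(\Se\setminus P_n)\le\omega_d\,\delta_s(P_n,\Se).$$
Taking expectations, inserting $\E \sigma(\Se\setminus P_n)=\kappa\,n^{-1}+O(n^{-2})$ from Theorem~\ref{T7.1} with $\kappa:=C(d)\,\pi^{d+1}(2/\omega_{d+1})^{d}\omega_d$, and carrying the $O(c^n)$ error from $E_n^c$, I obtain
$$\E \delta_s(P_n,\Se)\le\frac{2\pi}{\omega_{d+1}}\,\E \sigma(\Se\setminus P_n)+O(c^n)=O(n^{-1})$$
and
$$\E \delta_s(P_n,\Se)\ge\frac{1}{\omega_d}\left(\E \sigma(\Se\setminus P_n)-O(c^n)\right)=\frac{\kappa}{\omega_d}\,n^{-1}+O(n^{-2}).$$
The second estimate has the claimed order provided $\kappa>0$, and indeed $C(d)>0$: in \eqref{boldeff} the integrand $\nabla_d(u_1,\dots,u_d)\int_{U}\langle x,\bar e\rangle\,\sigma_{d-1}(\D x)$, with $U={\rm conv}_s\{u_1,\dots,u_d\}$, is nonnegative and strictly positive on a set of positive $\sigma_{d-1}^d$-measure, namely whenever $u_1,\dots,u_d$ are linearly independent and $U\not\subset\bar e^\perp$. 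Finally, to replace these asymptotic inequalities by constants valid for every $n\ge d+1$, note that for each such $n$ the number $\E \delta_s(P_n,\Se)$ is finite and strictly positive --- strictly positive since $P_n\ne\Se$, hence $\delta_s(P_n,\Se)>0$, almost surely --- so over the finitely many remaining values of $n$ the quantity $n\,\E \delta_s(P_n,\Se)$ is bounded above and below by positive constants; absorbing these into $c_1,c_2$ completes the proof.

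As for the difficulty, there is no substantial obstacle: the statement follows directly from Theorem~\ref{T7.1} via Lemma~\ref{le:hausdorff}. The only points requiring a little care are the bookkeeping for the exceptional event $E_n^c$ (on which $\delta_s(P_n,\Se)$ could a priori exceed $\pi/2$, so that Lemma~\ref{le:hausdorff} should not be invoked there verbatim), the positivity of the leading constant $C(d)$, and the passage from an asymptotic estimate to bounds uniform in $n\ge d+1$.
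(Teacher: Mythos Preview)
Your proof is correct and follows exactly the approach of the paper, which simply says ``Taking expectations and observing Theorem~\ref{T7.1}, we obtain as an immediate consequence that the order of $\E \delta_s(P_n, \Se)$ is $1/n$.'' You have merely been more scrupulous than the paper about the technical details (the exceptional event where $e\notin P_n$, positivity of $C(d)$, and the passage from asymptotic to uniform bounds), all of which the paper leaves implicit.
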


Now we derive almost sure upper and lower bounds. In the next two theorems, we may assume that $X_1,X_2,\dots$ is an independent sequence of random points in $\Se$, each with distribution $\mu$, and that $P_n$ is the spherical convex hull of the first $n$ points of this sequence.


\begin{Theorem}\label{th:LDI-upperbound} 
There is a constant $C$ depending only on the dimension such that
\begin{equation}\label{eq:LDI-upperbound}
\Pb \left(\delta_s(P_n,\Se) \le C\,\frac {\ln n}{n} \mbox{ \rm for almost all }n\right)=1.
\end{equation}
\end{Theorem}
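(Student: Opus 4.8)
The plan is to show that, with overwhelming probability, $\delta_s(P_n,\Se)$ is small, by using Lemma~\ref{L8.1}: $\delta_s(P_n,\Se)\ge\beta$ forces the existence of a $\beta$-wedge whose interior misses all of $X_1,\dots,X_n$. So the strategy is a union bound (net argument) over a finite collection of wedges. First I would fix $\beta=\beta_n:=C(\ln n)/n$ for a suitable constant $C$ to be chosen, and recall that a $\beta$-wedge is a cap $\Se\cap v^-$ with $\langle v,e\rangle=\cos\beta$; its $\mu$-measure is $\mu(v^-\cap\Se)=\beta/\pi$ (this is exactly the computation $\mu(v^-\cap\Se)=\alpha/\pi$ used repeatedly in Sections~3--5). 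Hence the probability that one fixed $\beta$-wedge contains no point among $n$ i.i.d.\ points is $(1-\beta/\pi)^n\le e^{-n\beta/\pi}$.

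Next I would replace the uncountable family of $\beta$-wedges by a finite $\epsilon$-net and use a standard enlargement trick: if a $\beta$-wedge has empty interior intersection with $\{X_1,\dots,X_n\}$, then a nearby wedge from the net with a slightly larger opening angle (say $2\beta$) also has empty interior intersection. Concretely, the $\beta$-wedges are parametrized by the apex direction $v$ ranging over the sphere $\{v\in\Se:\langle v,e\rangle=\cos\beta\}\cong\dS$, which is compact of dimension $d-1$; a net of cardinality $O(\beta^{-(d-1)})=O((n/\ln n)^{d-1})\le n^{d}$ (crudely) suffices so that every $\beta$-wedge is contained in some $2\beta$-wedge of the net. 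Therefore
\[
\Pb\!\left(\delta_s(P_n,\Se)\ge 2\beta_n\right)\le \#(\text{net})\cdot\Big(1-\tfrac{\beta_n}{\pi}\Big)^{n}\le n^{d}\,e^{-n\beta_n/\pi}=n^{d}\,e^{-(C/\pi)\ln n}=n^{d-C/\pi}.
\]
Choosing $C$ large enough that $C/\pi>d+2$ makes this summable in $n$, and the Borel--Cantelli lemma gives $\delta_s(P_n,\Se)<2\beta_n=2C(\ln n)/n$ for all but finitely many $n$ almost surely, which is \eqref{eq:LDI-upperbound} with the constant renamed.

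The main obstacle — really the only nonroutine point — is making the net/enlargement step precise: one must verify that if the interior of a $\beta$-wedge with apex direction $v$ avoids all sample points, and $v'$ is within spherical distance $c\beta$ of $v$ on $\dS$, then the interior of the $2\beta$-wedge with apex direction $v'$ also avoids all sample points. This is a purely geometric containment: a wedge $\Se\cap (v')^-$ of half-angle parameter $2\beta$ contains the wedge $\Se\cap v^-$ of parameter $\beta$ provided $v'$ is close enough to $v$, because the defining condition $\langle v',e\rangle=\cos 2\beta$ together with $d_s(v,v')\le c\beta$ forces $\{x:\langle v,x\rangle<0\}\subset\{x:\langle v',x\rangle<0\}$ for small $\beta$ (a first-order perturbation estimate, since $\cos 2\beta=1-2\beta^2+O(\beta^4)$ while the directional tilt contributes $O(\beta)$ in the relevant inner product and one checks the signs work out). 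Once this containment is established with an explicit dependence of the net radius on $\beta$, the counting bound $\#(\text{net})=O(\beta^{-(d-1)})$ is standard (volume/packing on the $(d-1)$-sphere $\dS$), and the rest is the Borel--Cantelli computation above. I would also note that one should handle the degenerate case $e\notin P_n$ separately, but by \eqref{negli} (with any fixed $r\in(0,\pi/2)$) this has probability $O(c^n)$, which is again summable, so it contributes nothing to the almost-sure statement.
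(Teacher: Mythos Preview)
Your approach is genuinely different from the paper's and, once two slips are repaired, gives a valid elementary proof. The paper does \emph{not} run a net argument: it pushes $\mu$ forward to $e^\perp\cong\R^d$ by the gnomonic projection, observes that the $\alpha/\pi$-floating body of the image measure is precisely the image of $B(e,\pi/2-\alpha)$, and then quotes Vu's concentration lemma \cite[Lemma 4.2]{VVu05} to get $\Pb(B(e,\pi/2-\alpha)\not\subset P_n)\le e^{-b_2\alpha n}$, after which Borel--Cantelli finishes. The paper even remarks that a direct proof ``not using that lemma can also be given, but is slightly longer''; your wedge-net/union-bound argument is exactly such a direct proof, with the advantage of being self-contained.

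Two points need fixing. First, your enlargement step is stated backwards. From ``some $\beta$-wedge is empty'' you cannot conclude that a \emph{larger} $2\beta$-wedge from the net is empty. The displayed inequality $\Pb(\delta_s\ge 2\beta_n)\le\#(\text{net})\,(1-\beta_n/\pi)^n$ is correct, but it is justified by the reverse implication: if $\delta_s\ge 2\beta$, then by Lemma~\ref{L8.1} some $2\beta$-wedge is empty, and this \emph{contains} a net $\beta$-wedge, which is therefore also empty; now union-bound over the net of $\beta$-wedges.

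Second, the geometric containment you assert actually fails at the tips of the wedge. Writing $u=(\cos\theta)w+(\sin\theta)e$ with $w\in\dS$, the $\alpha$-wedge at $y\in\dS$ is $\{u:\tan\theta\le(\tan\alpha)\langle w,y\rangle\}$. A short computation shows that $W_\beta(y')\subset W_{2\beta}(y)$ would require $(\tan 2\beta-\tan\beta)\langle w,y'\rangle\ge(\tan 2\beta)\|y-y'\|$, which breaks down as $\langle w,y'\rangle\to 0$, i.e., near $\dS\cap (y')^\perp$. The standard remedy is to truncate: use as net sets the regions $W_\beta(y')\cap\{\langle w,y'\rangle\ge\tfrac12\}$. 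These still have $\mu$-measure $\ge c\beta$ for a dimensional constant $c>0$, and for them the containment in $W_{2\beta}(y)$ does hold once $\|y-y'\|\le\tfrac14$ (for small $\beta$), since then $(\tan 2\beta-\tan\beta)\cdot\tfrac12\ge(\tan 2\beta)\|y-y'\|$. A pleasant by-product is that the net on $\dS$ can be taken at a \emph{fixed} scale, so $\#(\text{net})$ is a constant $m=m(d)$ rather than $O(\beta^{-(d-1)})$; this is in the spirit of the argument for \eqref{negli} and of the sets $C_k$ in the proof of Theorem~\ref{th:LDI-lowerboundopt}. With these corrections the Borel--Cantelli computation goes through exactly as you wrote.
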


\begin{proof} 
We give a short proof based on the following interesting result of Vu \cite[Lemma 4.2]{VVu05} (a direct proof of (\ref{eq:LDI-upperbound}) not using that lemma can also be given, but is slightly longer). Let $\psi$ be a probability measure on $\mathbb{R}^d$. Choose independent random points $Y_1,\ldots,Y_n$ with distribution $\psi$ and let $K_n:=\mbox{conv} \{Y_1,\ldots,Y_n\}$. For $t>0$, the $t$-{\sl floating body}, $K(t)$, is defined as the closure of the set of points $x \in \mathbb{R}^d$ that are not contained in any closed halfspace $H$ with $\psi(H)\le t$. It is not hard to see that $K(t)$ is a convex set. The result from \cite{VVu05} that we need says that there are positive constants $b_1,b_2$, depending only on the dimension, such that for sufficiently large $n$ and for any $t\ge b_1\frac {\ln n}{n}$,
\begin{align}\label{VanVu}
\Pb\left(K(t)\not\subset K_n\right)\le\exp\{-b_2tn\}.
\end{align}

Now recall from Section \ref{sec3} that the image measure of $\mu$, restricted to the interior of $\Se$, under the gnomonic projection $h$ that maps $v$ to $h(v)=\langle e,v\rangle^{-1}v-e$, is a probability measure $\psi$ on $e^\perp$, the latter identified with ${\mathbb R}^d$. Clearly, $h$ is one-to-one between the interior of $\Se$ and $e^\perp$. The random spherical polytope $P_n$ is mapped by $h$ to the random polytope $K_n$ (chosen according to $\psi$) and conversely.

Assume $z \in \Se$ and $\langle e,z\rangle =\cos \alpha$ with $\alpha \in (0,\pi/2)$. The halfspace $z^-=\{x\in \mathbb{R}^{d+1}: \langle z,x\rangle \le 0\}$ intersects $\Se$ in a set of $\mu$-measure $\alpha/\pi$, and $h(z^-\cap \Se)$ is a halfspace in $e^\perp$ whose $\psi$-measure is $\alpha/\pi$. Also conversely, for every halfspace $H$ in $e^\perp$ with $\psi(H)=\alpha/\pi$, the pre-image $h^{-1}(H)$ is of the form  $z^-\cap \Se$ for some $z \in \Se$ with $\langle e,z\rangle=\cos \alpha$. It follows that the $\alpha/\pi$-floating body of the measure $\psi$ is the $h$-image of $B(e,\pi/2-\alpha)$. Vu's lemma applies in $e^\perp$ and, via the inverse of the gnomonic map, also in $\Se$. There it says that there are constants $b_1,b_2>0$ such that for large $n$ and any $\alpha \ge b_1\frac {\ln n}n$,
\[
\Pb\left(B(e,\pi/2-\alpha)\not\subset P_n\right)\le\exp\{-b_2\alpha n\}.
\]
We now choose $C\ge b_1$ so that $Cb_2\ge 2$ and set $r=\pi/2-C\frac {\ln n}n$. Then it follows that
$$
\mathbb{P}(B(e,r)\not\subset P_n)\le \exp\{-Cb_2 \ln n\}\le {n^{-2}}.
$$
As $B(e,r)\not\subset P_n$ is equivalent to $\delta_s(P_n,\Se) > C\frac {\ln n}{n}$, the last inequality  implies the theorem via the Borel--Cantelli lemma.
\end{proof}

In the other direction, we can only show the following.

\begin{Theorem}\label{th:LDI-lowerbound} 
For any $\gamma>2$ we have
\begin{equation}\label{eq:LDI-lowerbound}
\Pb\left( \delta_s(P_n,\Se) \ge  n^{-\gamma} \enspace \mbox{ \rm for almost all }n\right )=1.
\end{equation}
\end{Theorem}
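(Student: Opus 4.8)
The plan is to show that, almost surely, $\delta_s(P_n,\Se) < n^{-\gamma}$ can fail only finitely often, i.e.\ that $\sum_n \Pb(\delta_s(P_n,\Se) < n^{-\gamma}) < \infty$, and then invoke the Borel--Cantelli lemma (the ``easy'' half, since the events $\{\delta_s(P_n,\Se) < n^{-\gamma}\}$ are not independent). So the whole problem reduces to a good upper bound on $\Pb(\delta_s(P_n,\Se) < \beta)$ for small $\beta = n^{-\gamma}$.

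The key geometric tool is Lemma \ref{L8.1}: $\delta_s(P_n,\Se) < \beta$ holds if and only if \emph{every} $\beta$-wedge has interior meeting $P_n$, equivalently every $\beta$-wedge contains at least one of the points $X_1,\dots,X_n$ in its interior. I would turn this around: $\delta_s(P_n,\Se) \ge \beta$ holds as soon as there exists \emph{one} $\beta$-wedge whose interior is disjoint from $\{X_1,\dots,X_n\}$. The strategy is therefore to exhibit many pairwise ``almost disjoint'' candidate $\beta$-wedges and show that with overwhelming probability at least one of them is empty. Concretely, a $\beta$-wedge is $v^-\cap\Se$ with $\langle v,e\rangle=\cos\beta$; such $v$ is parametrized by its projection $\bar v\in\dS$, so there is a whole $(d-1)$-sphere's worth of $\beta$-wedges. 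Pick a maximal $\rho$-separated set $\bar v_1,\dots,\bar v_M\in\dS$ with $\rho$ a suitable small multiple of $\beta$; then $M\ge c\,\rho^{-(d-1)}\ge c'\beta^{-(d-1)} = c' n^{\gamma(d-1)}$ of them, and for $\rho$ chosen small enough relative to $\beta$ the corresponding wedges $W_i := v_i^-\cap\Se$ contain pairwise disjoint sub-wedges (or sub-caps) $W_i'$ each of $\mu$-measure at least some $c''\beta$ — here one uses that a $\beta$-wedge has $\mu$-measure exactly $\beta/\pi$, and a slightly ``thinner'' sub-wedge still has measure of order $\beta$, while thin enough to be pairwise disjoint. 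Then, since the $W_i'$ are disjoint, the events $E_i := \{W_i' \cap \{X_1,\dots,X_n\}=\emptyset\}$ are negatively correlated in the sense that
$$
\Pb\Big(\bigcap_{i=1}^M E_i^c\Big) \le \prod_{i=1}^M \Pb(E_i^c) = \prod_{i=1}^M\big(1-(1-\mu(W_i'))^n\big).
$$
Wait — that inequality is false for intersections of $E_i^c$; instead I would argue directly: the probability that \emph{every} $W_i'$ contains a point is at most the probability in a multinomial allocation of $n$ balls into $M+1$ cells (the $W_i'$ plus the remainder) that all first $M$ cells are nonempty, which, since each cell has probability $\le c''\beta$ and $M$ is huge compared with $n$, is extremely small — for instance it is $0$ once $M>n$ since $n$ points cannot meet $M$ disjoint sets. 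Indeed with $M \ge c' n^{\gamma(d-1)} > n$ for large $n$ (as $\gamma>2>1/(d-1)$ fails only in low dimensions, so one may need a mild separate treatment, but $\gamma>2$ and $d\ge2$ give $\gamma(d-1)\ge 2>1$), it is \emph{impossible} for $n$ points to hit all $M$ disjoint sets $W_i'$, hence $\Pb(\delta_s(P_n,\Se)<\beta)=0$ for all large $n$, which is far stronger than summability.

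The main obstacle I anticipate is the geometric bookkeeping in the separation step: arranging that the ``thinned'' sub-wedges $W_i'$ are genuinely pairwise disjoint while each retains $\mu$-measure of order $\beta$, and getting the count $M$ to beat $n$. One must be careful that a $\beta$-wedge is a spherical cap-like region of angular ``height'' $\beta$ but full $(d-1)$-dimensional extent around $\dS$, so disjointness is controlled by the separation of the footpoints $\bar v_i$ transverse to the wedge direction, not by $\beta$ itself; choosing $\rho$ a fixed small constant times $\beta$ makes the wedges overlap, so instead one should cut each wedge down to a thin ``slab'' of transverse width $\sim\beta$ around its own axis, keeping measure $\sim \beta\cdot\beta^{d-1}$? — no, that would be too small. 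The correct choice, which I would work out in detail, is to keep the wedges \emph{full} in the $\beta$-direction but separated by a \emph{constant} angle $\rho$ (independent of $n$) in $\dS$; then $M$ is a fixed constant, the $W_i'$ are disjoint, each has measure $\sim c(\rho)\beta$, and $\Pb(\text{all }W_i'\text{ hit}) \le \big(1-(1-c\beta)^n\big)^{?}$ is not automatically summable — so one genuinely needs the ``many wedges'' version with $M\to\infty$, and the resolution is that although full $\beta$-wedges at separation $o(1)$ overlap, one can still extract $M \sim \beta^{-(d-1)}$ \emph{disjoint} regions each of measure $\gtrsim \beta^{d}$ (a grid of little boxes of side $\sim\beta$ in a collar of width $\beta$), giving $M\cdot(\text{measure}) \sim \beta^{-(d-1)}\cdot\beta^{d} = \beta \to 0$ while $M\sim n^{\gamma(d-1)}\gg n$; since $n$ points cannot meet $M > n$ disjoint sets, again $\Pb(\delta_s(P_n,\Se)<\beta)=0$ for all large $n$. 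Thus the decisive inequality is simply $M(n)>n$, i.e.\ $\gamma(d-1)>1$, which holds for all $\gamma>2$ and $d\ge2$; Borel--Cantelli (trivially, as the probabilities are eventually $0$) then yields \eqref{eq:LDI-lowerbound}.
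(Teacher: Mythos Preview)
Your overall plan---bound $\Pb(\delta_s(P_n,\Se)<n^{-\gamma})$ and apply Borel--Cantelli---is correct, and you are right that for a \emph{wedge} $W=v^-\cap\Se$ the interior of $W$ meets $P_n$ if and only if some $X_i$ lies in it (because a wedge is cut out by a single halfspace through the origin). But the argument collapses at the ``little boxes'' step. From Lemma~\ref{L8.1} one deduces only that every $\beta$-\emph{wedge} contains some $X_i$; a $\beta$-wedge has $\mu$-measure $\beta/\pi$ and its footprint on $\dS$ is an entire hemisphere of $\dS$, so two wedges with nearby parameters overlap almost completely. There is no implication from ``every wedge is hit'' to ``every box in a grid of $M\sim\beta^{-(d-1)}$ small disjoint boxes is hit''---one $X_i$ near the boundary simultaneously lies in a whole hemisphere's worth of wedges. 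In fact $d+1$ points placed near $\dS$ with projections at the vertices of a regular simplex in $\dS$ already meet every $\beta$-wedge, so the event $\{\delta_s(P_n,\Se)<\beta\}$ has strictly positive probability for every $n\ge d+1$ and every $\beta>0$; your conclusion $\Pb(\delta_s(P_n,\Se)<n^{-\gamma})=0$ for large $n$ is therefore false. (Theorem~\ref{th:LDI-lowerboundopt} makes this quantitative: the probabilities are not even summable for exponents close to $1$.)

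The paper's proof avoids all of this by using a \emph{single} fixed $\beta$-wedge $W$: if $\delta_s(P_n,\Se)\le\beta$ then $W$ must contain some $X_i$, hence
\[
\Pb\bigl(\delta_s(P_n,\Se)\le\beta\bigr)\ \le\ 1-(1-\beta/\pi)^n\ \le\ \frac{n\beta}{\pi}.
\]
With $\beta=n^{-\gamma}$ this is $\pi^{-1}n^{1-\gamma}$, which is summable exactly when $\gamma>2$; Borel--Cantelli then gives~\eqref{eq:LDI-lowerbound}. The threshold $\gamma>2$ is thus not an artifact but the natural outcome of the single-wedge bound, and the paper explicitly leaves open whether it can be improved.
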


\begin{proof}
Let $0<\eps<\pi/2$ be given. Choose $y\in\dS$ and let $v:= (\cos\eps)e+(\sin\eps)y$ and $W:= \Se\cap v^-$. Then
\begin{align*}
\Pb ( \delta_s(P_n,\Se) \le\eps ) 
&\le \Pb (W\cap\{X_1,\dots,X_n\}\not=\emptyset) \\
&= 1-\Pb (X_i\notin W \text{ for }i=1,\dots,n ) =1-\prod_{i=1}^n \Pb (X_i\notin W) \\
&= 1- \prod_{i=1}^n(1-\Pb(X_i\in W))= 1-(1-\mu(W))^n \\
&= 1-\left(1-\frac{\eps}{\pi}\right)^n.
\end{align*}
Now let
$$ \eps_n:={n^{-(2+\eta)}}\quad\mbox{with } \eta>0.$$
Then
$$ \Pb (\delta_s(P_n,\Se) \le\eps_n ) \le \frac{1}{\pi} \, {n^{-(1+\eta)}} +O(n^{-2}).$$
It follows that
$$ \sum_{n=1}^\infty \Pb (\delta_s(P_n,\Se) \le\eps_n) <\infty.$$
Hence, the Borel--Cantelli lemma gives
$$  \Pb (\delta_s(P_n,\Se) \le\eps_n \text{ for infinitely many }n ) =0$$
and, therefore,
$$  \Pb\left(\delta_s(P_n,\Se) > {n^{-(2+\eta)}}\text{ \rm for almost all }n\right ) = 1,$$
which is (\ref{eq:LDI-lowerbound}).
\end{proof}

Under stronger independence assumptions, we can give counterparts to the preceding two theorems. 

\noindent{\bf Assumption $(*)$.} For each $n\in{\mathbb N}$, $X_1^{(n)},\dots,X_n^{(n)}$ are independent random points in $\Se$, each with distribution $\mu$, and $P_n$ is their spherical convex hull. Writing $A_n$ for the $n$-tuple $(X_1^{(n)}, \dots, X_n^{(n)})$, the sequence $A_1,A_2,\dots$ is independent.

\begin{Theorem}\label{th:LDI-upperboundopt} 
Under Assumption $(*)$, 
\begin{equation*}
\Pb\left(\delta_s(P_n,\Se) \ge c\, \frac {\ln n}{n}  \mbox{ \rm for infinitely many }n\right)=1,
\end{equation*}
with $c=\omega_{d+1}/5\omega_d$.
\end{Theorem}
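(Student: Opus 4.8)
The plan is to apply the second (converse) Borel--Cantelli lemma, which is available here precisely because Assumption $(*)$ makes the events
$$E_n:=\left\{\delta_s(P_n,\Se)\ge\beta_n\right\},\qquad\text{with }\beta_n:=c\,\frac{\ln n}{n},\ \ c=\frac{\omega_{d+1}}{5\omega_d},$$
mutually independent: each $E_n$ is a measurable function of the tuple $A_n=(X_1^{(n)},\dots,X_n^{(n)})$ alone, and the $A_n$ are independent. Hence it suffices to prove $\sum_n\Pb(E_n)=\infty$, and for this a crude lower bound on each $\Pb(E_n)$, coming from a single deterministically chosen wedge, is enough.

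For $n$ so large that $\beta_n<\pi/2$, I would fix an arbitrary point $y_0\in\dS$, set $v_n:=(\cos\beta_n)e-(\sin\beta_n)y_0$, and let $W_n:=\Se\cap v_n^-$ be the associated $\beta_n$-wedge. Since $v_n=(\cos\beta_n)e+(\sin\beta_n)(-y_0)$ with $-y_0\in\dS$, the relation $\mu(v^-\cap\Se)=\alpha/\pi$ from Section~\ref{sec3} gives $\mu(W_n)=\beta_n/\pi$. The key step is the geometric implication that the event $F_n:=\{\langle v_n,X_i^{(n)}\rangle\ge 0\text{ for }i=1,\dots,n\}$ is contained in $E_n$: on $F_n$ every $p\in P_n={\rm conv}_s\{X_1^{(n)},\dots,X_n^{(n)}\}$ is a normalized nonnegative combination of the $X_i^{(n)}$, hence satisfies $\langle v_n,p\rangle\ge0$, so $P_n$ is disjoint from the interior of $W_n$ (which lies in $\{\langle v_n,\cdot\rangle<0\}$), and then Lemma~\ref{L8.1} forces $\delta_s(P_n,\Se)\ge\beta_n$. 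Consequently
$$\Pb(E_n)\ \ge\ \Pb(F_n)\ =\ \big(1-\mu(W_n)\big)^n\ =\ \Big(1-\frac{\beta_n}{\pi}\Big)^n.$$
One can instead route this estimate through the missed volume: on $F_n$ one has $\sigma(\Se\setminus P_n)\ge\sigma(W_n)=\tfrac{\omega_{d+1}}{2\pi}\beta_n$, so Lemma~\ref{le:hausdorff} gives $\delta_s(P_n,\Se)\ge\tfrac{\omega_{d+1}}{2\pi\omega_d}\beta_n$, and taking the wedge parameter equal to $\tfrac{2\pi}{5}\cdot\tfrac{\ln n}{n}$ then reproduces exactly the constant $c$ above.

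Finally, for $n$ large we have $\beta_n/\pi<1/2$, so the elementary bound $1-t\ge e^{-2t}$ on $[0,1/2]$ yields $\Pb(E_n)\ge\exp(-2n\beta_n/\pi)=n^{-2\omega_{d+1}/(5\pi\omega_d)}$. Since $\omega_{d+1}/\omega_d\le2$ for $d\ge2$ (immediate from $\omega_{d+1}/\omega_d=\sqrt{\pi}\,\Gamma(d/2)/\Gamma((d+1)/2)$ and its monotonicity in $d$), the exponent is at most $4/(5\pi)<1$, hence $\sum_n\Pb(E_n)=\infty$, and the converse Borel--Cantelli lemma gives $\Pb(E_n\text{ infinitely often})=1$, which is the assertion. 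The step requiring the most care is the geometric implication $F_n\subset E_n$ — checking that all sample points lying on the nonnegative side of $v_n$ forces the interior of the $\beta_n$-wedge $W_n$ to miss $P_n$, so that Lemma~\ref{L8.1} applies; the remaining steps (the wedge volume, the inequality $1-t\ge e^{-2t}$, the bound on $\omega_{d+1}/\omega_d$) are routine, and the constant $\omega_{d+1}/(5\omega_d)$ is intentionally far from optimal, leaving ample slack below the critical exponent $1$.
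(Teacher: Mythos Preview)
Your proof is correct and follows the same overall scheme as the paper (verify $\sum_n\Pb(E_n)=\infty$ and invoke the converse Borel--Cantelli lemma, using the independence furnished by Assumption~$(*)$), but the geometric event you use to lower-bound $\Pb(E_n)$ is different. The paper takes as $F_n$ the event that \emph{all} points lie in the shrunken cap $\{y\in\Se:\langle y,e\rangle\ge c\,\tfrac{\ln n}{n}\}$, i.e.\ that none of the $X_i^{(n)}$ falls in the thin collar of width $\sim c\,\tfrac{\ln n}{n}$ around $\dS$; this collar has $\mu$-measure $\sim \tfrac{2\omega_d}{\omega_{d+1}}\,c\,\tfrac{\ln n}{n}\le\tfrac12\tfrac{\ln n}{n}$, and the inequality $1-\tfrac{x}{2}\ge e^{-x}$ then gives $\Pb(F_n)\ge n^{-1}$. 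You instead require the points to miss a \emph{single} $\beta_n$-wedge and apply Lemma~\ref{L8.1} directly; since the wedge has the smaller measure $\beta_n/\pi$, you obtain the stronger lower bound $\Pb(F_n)\ge n^{-2c/\pi}\ge n^{-4/(5\pi)}$. Both routes lead to a divergent series; yours is slightly more economical and ties in neatly with Lemma~\ref{L8.1}, while the paper's collar argument makes the geometric picture (``all points far from the boundary'') a bit more transparent. Your aside routing the estimate through Lemma~\ref{le:hausdorff} is unnecessary given that Lemma~\ref{L8.1} already yields $\delta_s(P_n,\Se)\ge\beta_n$ directly, but it is not wrong.
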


\begin{proof}
First observe that, for arbitrary $c>0$, 
\begin{eqnarray*}
& & \lim_{n \to \infty} \frac{n}{\ln n}  \, \mu \left(\left\{ y\in \Se:\, \langle y, e \rangle \leq c\, \frac {\ln n}{n} \right\} \right) \\
&& = \frac{2 }{\omega_{d+1}} \lim_{n \to \infty} \frac{n}{\ln n} \, \sigma \left(\left\{ y\in\Se:\, \langle y, e \rangle \le c\, \frac {\ln n}{n} \right\} \right) \\
&& = \frac{2 }{\omega_{d+1}} \lim_{n \to \infty} \frac{n}{\ln n} \, \omega_d \, c\, \frac {\ln n}{n} =
\frac{2 \omega_d } {\omega_{d+1}} \, c.
\end{eqnarray*}
Thus, if $c=\omega_{d+1}/5\omega_d$, then 
$$
\mu \left(\left\{ y\in\Se:\, \langle y, e \rangle \leq c\, \frac {\ln n}{n} \right\} \right)  
\le \frac 12\, \frac {\ln n }n 
$$
for all sufficiently large $n$. For these $n$ we get
\begin{align*}
\Pb \left(\delta_s(P_n,\Se) \ge c\, \frac {\ln n}{n} \right)
& \ge  \Pb \left(X_1^{(n)}, \dots, X_n^{(n)}  \in \left\{ y\in\Se: \langle y, e \rangle \geq c\, \frac {\ln n}{n} \right\}\right)\\ 
&= \mu \left(\left\{ y\in\Se: \langle y, e \rangle \ge c\, \frac {\ln n}{n} \right\}\right)^n\\ 
& =  \left( 1-  \mu \left(\left\{ y\in\Se: \langle y, e \rangle \leq c\, \frac {\ln n}{n} \right\}\right) \right)^n \\ 
& \ge  \left( 1- \frac 12\, \frac {\ln n }n \right)^n
\\ & \geq 
e^{-\ln n} = n^{-1},
\end{align*}
because $1- \frac x2 \geq e^{-x}$ for $x \in [0,1]$. 
This yields
$$ \sum_{n=1}^\infty \Pb \, \left(\delta_s(P_n,\Se) \ge c\, \frac {\ln n}{n} \right) = \infty, 
$$
and the Borel-Cantelli lemma yields the assertion.
\end{proof}

Observe that Theorems \ref{th:LDI-upperbound} and \ref{th:LDI-upperboundopt} imply that, under Assumption $(*)$,
$$
\Pb \left(c \leq  \limsup_{n \to \infty} \frac {n}{\ln n}\,\delta_s(P_n,\Se)  \leq C \right)=1 , $$
and, in particular,
$$ \Pb \left(\limsup_{n \to \infty} n\,\delta_s(P_n,\Se) =\infty\right)= 1 .$$

\vspace{2mm}

Our counterpart to Theorem \ref{th:LDI-lowerbound} is of a slightly different order. 

\begin{Theorem}\label{th:LDI-lowerboundopt} 
Under Assumption $(*)$, there is a number $0<\eps<1$, depending only on the dimension, such that
\begin{equation*}
\Pb\left(\delta_s(P_n,\Se) \le n^{-(1 + \eps)}\enspace \mbox{\rm for infinitely many } n\right)=1.
\end{equation*}
\end{Theorem}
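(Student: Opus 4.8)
\noindent\emph{Proof idea.} The plan is to apply the second (converse) Borel--Cantelli lemma, which is available precisely because of Assumption~$(*)$: I shall construct, for each $n$, an event $E_n$ that is measurable with respect to $A_n$ alone, such that $E_n\subseteq\{\delta_s(P_n,\Se)\le n^{-(1+\eps)}\}$ for all large $n$ and $\sum_n\Pb(E_n)=\infty$. Since the $A_n$ are independent, so are the $E_n$, and Borel--Cantelli then gives $\Pb(E_n\text{ for infinitely many }n)=1$, hence the assertion. The geometric principle behind $E_n$ is: if the relevant sample points lie in a thin collar of $\dS$ and their orthogonal projections to $\dS$ leave no gaps, then $P_n$ is uniformly close to $\Se$, with a closeness governed only by the width of the collar, not by the mesh of the projections.

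To realise this, I would fix a small constant $\rho>0$ depending only on $d$, partition $\dS\cong\mathbb S^{d-1}$ into $M=M(d)$ spherical simplices $T_1,\dots,T_M$ of diameter at most $\rho$, and for each $j$ choose $d$ ``outer vertices'' $c_{j,1},\dots,c_{j,d}\in\dS$ so that $T_j$ lies in the relative interior of ${\rm conv}_s\{c_{j,1},\dots,c_{j,d}\}$, at distance at least some dimensional constant $m_d$ from its relative boundary. With a small $\delta>0$ to be fixed and a constant $\xi\in(0,m_d)$, put $h_n:=n^{-(1+\delta)}$ and, with $\Pi$ the orthogonal projection onto $\dS$ as in Section~\ref{sec7},
\[
R_{j,k}^{(n)}:=\bigl\{z\in\Se:\ d_s(\Pi(z),c_{j,k})\le\xi,\ \langle z,e\rangle\le\sin h_n\bigr\},
\]
and let $E_n$ be the event that $A_n$ meets every $R_{j,k}^{(n)}$. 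On $E_n$, choose $z_{j,k}\in A_n\cap R_{j,k}^{(n)}$. For $y\in\dS$, pick $j$ with $y\in T_j$; since $d_s(\Pi(z_{j,k}),c_{j,k})\le\xi<m_d$, the perturbed simplex still contains $y$, that is, $y\in{\rm conv}_s\{\Pi(z_{j,1}),\dots,\Pi(z_{j,d})\}$. As $\Pi$ is, up to normalisation, the linear projection onto $e^\perp$, it maps ${\rm conv}_s\{z_{j,1},\dots,z_{j,d}\}$ onto ${\rm conv}_s\{\Pi(z_{j,1}),\dots,\Pi(z_{j,d})\}$, so there is a point $z^\ast\in{\rm conv}_s\{z_{j,1},\dots,z_{j,d}\}\subseteq P_n$ with $\Pi(z^\ast)=y$. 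Writing $z^\ast$ as a normalised nonnegative combination of the $z_{j,k}$ and bounding its norm from below by pairing with a centre $w_0\in\dS$ of the small ball containing all $\Pi(z_{j,k})$, one gets $\langle z^\ast,e\rangle\le\tan(h_n)/\cos R$ with $R=R(d)<\pi/2$, hence $d_s(z^\ast,y)=\arcsin\langle z^\ast,e\rangle\le c_d h_n$ for large $n$. Therefore $\max_{y\in\dS}\min_{x\in P_n}d_s(x,y)\le c_d h_n$, which by \eqref{8.1} equals $\delta_s(P_n,\Se)$; taking $\eps:=\delta/2$ gives $\delta_s(P_n,\Se)\le c_d n^{-(1+\delta)}\le n^{-(1+\eps)}$ for all large $n$, so $E_n\subseteq\{\delta_s(P_n,\Se)\le n^{-(1+\eps)}\}$ eventually.

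It remains to bound $\Pb(E_n)$ from below. Since $\xi$ and $\rho$ are constants, $\mu(R_{j,k}^{(n)})$ is of order $\xi^{d-1}h_n=\Theta(h_n)$, so the number of points of $A_n$ falling into a fixed $R_{j,k}^{(n)}$ is asymptotically Poisson with mean $\lambda_{j,k}^{(n)}=n\,\mu(R_{j,k}^{(n)})=\Theta(n^{-\delta})$. The $Md$ regions being pairwise disjoint, inclusion--exclusion over the events ``$R_{j,k}^{(n)}$ is missed'' (equivalently, Poissonising and then de-Poissonising the sample size) yields $\Pb(E_n)=(1+o(1))\prod_{j,k}\bigl(1-e^{-\lambda_{j,k}^{(n)}}\bigr)=\Theta\bigl(n^{-\delta Md}\bigr)$, provided $\delta$ is small enough that the error terms of the expansion, of order $n\,\mu(R_{j,k}^{(n)})^2$, are negligible against the main term. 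Fixing $\delta$ with $0<\delta<1/(Md)$, we get $\eps=\delta/2\in(0,1)$ depending only on $d$ (since $M=M(d)$), and $\sum_n\Pb(E_n)\ge c\sum_n n^{-\delta Md}=\infty$. As the $E_n$ are independent under Assumption~$(*)$, the converse Borel--Cantelli lemma gives $\Pb(E_n\text{ for infinitely many }n)=1$, whence $\Pb(\delta_s(P_n,\Se)\le n^{-(1+\eps)}\text{ for infinitely many }n)=1$.

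The genuine obstacle here is the lower bound on $\Pb(E_n)$: it requires $Md$ \emph{rare} events to occur simultaneously, and for disjoint bins in a multinomial scheme such events are slightly \emph{negatively} correlated, so association-type inequalities bound $\Pb(E_n)$ from the wrong side; one must run the full inclusion--exclusion in the regime $\lambda_{j,k}^{(n)}\to0$ and verify that the neglected terms are of strictly lower order. The only other point needing care is the elementary but slightly tedious spherical estimate showing that the spherical convex hull of points lying in a thin collar over a small region of $\dS$ has height controlled by the collar width.
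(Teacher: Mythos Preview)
Your argument is correct, and the overall architecture---build events $E_n$ measurable in $A_n$, show $E_n\subset\{\delta_s(P_n,\Se)\le n^{-(1+\eps)}\}$, show $\sum\Pb(E_n)=\infty$, apply the converse Borel--Cantelli lemma---is exactly the paper's. The two implementations differ, however.

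On the geometric side, you cover $\dS$ by simplices with ``outer vertices'', force a sample point into a thin collar over each outer vertex, and then argue by projection that every $y\in\dS$ is within $c_dh_n$ of $P_n$. The paper instead uses Lemma~\ref{L8.1} directly: it takes a saturated packing $B_1,\dots,B_m$ of balls of radius $1/4$ in $\dS$, defines $C_k$ as the intersection of all $\alpha$-wedges containing $B_k$, and observes that every $\alpha$-wedge contains some $C_k$; hence if each $C_k$ is hit, no $\alpha$-wedge has empty interior and $\delta_s(P_n,\Se)\le\alpha$. This avoids the simplex/projection calculus and the separate height estimate altogether; the number of target regions is $m=m(d)$ rather than $Md$.

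On the probabilistic side, what you flag as the ``genuine obstacle''---a lower bound on the probability that all target regions are hit---has a one-line solution that bypasses inclusion--exclusion and Poissonisation entirely. With disjoint regions $C_1,\dots,C_m$ of measure $\ge c_1\alpha$, the multinomial term in which exactly one sample point falls in each $C_k$ and the remaining $n-m$ fall outside already gives
\[
\Pb(E_n)\ \ge\ \binom{n}{1,\dots,1,n-m}\,\mu(C_1)^m\,\bigl(1-m\mu(C_1)\bigr)^{n-m}\ \ge\ \frac{n!}{(n-m)!}\,c_1^m\alpha^m\bigl(1-mc_1\alpha\bigr)^{n-m}.
\]
With $\alpha=n^{-(1+\eps)}$ and $\eps=1/m$ this is $\ge c\,n^{-1}$, so the series diverges without any appeal to negative association or error-term bookkeeping. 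The same trick would simplify your version: taking the single multinomial term with one point in each $R_{j,k}^{(n)}$ gives $\Pb(E_n)\ge c\,n^{-\delta Md}$ directly.
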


\begin{proof}
We choose a saturated sequence of pairwise disjoint balls $B_1, \dots, B_m$ of radius $1/4$ in $\partial \Se$ (where $m$ depends only on the dimension) and define $C_k$ as the intersection of all $\alpha$-wedges containing $B_k$, $k=1,\dots,m$, where $\alpha$ will be specified soon. Since the system of the balls is saturated, each halfsphere of $\partial \Se$ contains at least one of the balls $B_k$ and hence  each $\alpha$-wedge contains at least one of the sets $C_k$. The sets $C_k$ are pairwise disjoint, and
$$ \mu(C_k) \ge c_1\alpha$$
with a suitable constant $c_1>0$ depending only on the dimension.

If each set $C_k$ contains one of the random points $X_1^{(n)}, \dots , X_n^{(n)}$, then $\delta_s(P_n,\Se) \le \alpha$ by Lemma \ref{L8.1}, thus
$$ \Pb\left( \delta_s(P_n,\Se) \le  \alpha \right)  \ge \Pb\left( \left\{X_1^{(n)},\ldots,X_n^{(n)}\right\}\cap C_k\neq\emptyset \;\mbox{ for }k=1,\ldots,m\right).$$
Define $C_{m+1}:= \Se\setminus\bigcup_{k=1}^m C_k$. We have $\left\{X_1^{(n)},\ldots,X_n^{(n)}\right\}\cap C_k\neq\emptyset$ for $k=1,\ldots,m$ if and only if there are  sets $I_1,\dots,I_{m+1}\subset \{1,\ldots,n\}$ which partition $\{1,\ldots,n\}$ and are such that $|I_i|\ge 1$ for $i=1,\dots,m$ and $X_r^{(n)}\in C_k$ for $r\in I_k$, $k=1,\dots,m+1$. Hence, using the fact that all the sets $C_k$ have the same spherical volume, we get, for $n\ge m$,
\begin{eqnarray*}
&& \Pb\left( \left\{X_1^{(n)},\ldots,X_n^{(n)}\right\}\cap C_k\neq\emptyset \;\mbox{ for }k=1,\ldots,m\right)\\
&&= \sum_{l_1,\dots,l_m\ge 1,\; l_{m+1}\ge 0}^{n} \binom{n}{l_1,\dots,l_m,l_{m+1}} \mu(C_1)^{l_1+\cdots+l_m} \left(1-\sum_{k=1}^m \mu(C_k)\right)^{l_{m+1}}\\
&& =\binom{n}{1,\dots,1,n-m}\mu(C_1)^{m}\left(1-m\mu(C_1)\right)^{n-m}+\mbox{nonnegative terms}\\
&& \ge \frac{n!}{(n-m)!}c_1^m \alpha^m \left(1-mc_1\alpha\right)^{n-m}=:A.
\end{eqnarray*}
If we now choose $\alpha= n^{-(1+\eps)}$ with $\eps:= m^{-1}$, then
$$ A= \frac{n!}{(n-m)!n^m}\, c_1^m {n^{-1}} \left[\left(1-\frac{c_1m}{n^{1+\eps}}\right)^{n^{1+\eps}} \right]^{\frac{n-m}{n^{1+\eps}}} 
\ge c_2(m)c_1^mc_3(m){n^{-1}},$$
where the constants $c_1,c_2,c_3$ are positive. We conclude that
$$ \sum_{n=m}^\infty \Pb\left(\delta_s(P_n,\Se)\le n^{-(1+\eps)}\right) =\infty,$$
and the Borell--Cantelli lemma yields the assertion.
\end{proof}

Observe that Theorem \ref{th:LDI-lowerboundopt} implies that, under Assumption $(*)$,
$$ \Pb \left(\liminf_{n \to \infty} n\delta_s(P_n,\Se) =0\right)= 1 . $$

\section{Concluding Remarks}\label{sec9}

If we define $S(P)=:F_{d-1}(P)$, $U_1(P)=:F_1(P)$, $\sigma(P)=:F_d(P)$ for a spherical polytope $P\in\Se$, then we can write the results of Theorems \ref{T2}, \ref{T3}, \ref{T7.1} in the form
\begin{equation}\label{9.1}
{\mathbb E}\left(F_i(\Se)-F_i(P_n)\right) =c_i(d)n^{-(d+1-i)} + O\left(n^{-(d+2-i)}\right),\qquad i=1,d-1,d,
\end{equation}
where $c_i(d)$ is a constant which is explicitly known for $i=1$ and $i=d-1$, but not for $i=d$. The dependence of the asymptotic expansion in (\ref{9.1}) on the number $i$ is a phenomenon that does not occur for the analogous problem for quermassintegrals (intrinsic volumes) of random polytopes in smooth convex bodies in Euclidean space; compare \cite{Bar92} and \cite{Rei04}. It would be interesting to extend (\ref{9.1}) to the remaining functionals $F_i$, $i=2,\dots,d-2$. These could either be the spherical quermassintegrals, defined by \cite[(6.62)]{SW08}, or the spherical intrinsic volumes, defined in \cite[p. 256]{SW08}. In spherical space, these are two different series of functionals, though related by linear relations; see \cite[(6.63)]{SW08}.

Also the asymptotic behaviour of the expected number of $k$-faces of $P_n$, which we have determined for $k=d-1$ in Theorem \ref{T1} (and hence also for $k=d-2$) and for $k=0$ in Theorem \ref{T7.1} (with an unknown constant), would be interesting to know in the remaining cases. 

Finally, Theorem \ref{th:LDI-upperbound} should be compared with results about the Hausdorff distance in Euclidean spaces, which are in a similar spirit, though distinctly different in the orders; we refer to \cite{DW96} and, for circumscribed random polytopes, to \cite{HS14}. It would be interesting to know whether the lower bound (\ref{eq:LDI-lowerbound}) can be improved.

\noindent Authors' addresses:

\noindent Imre B\'{a}r\'{a}ny\\
R\'enyi Institute of Mathematics \\
Hungarian Academy of Sciences \\
H-1364 Budapest, Hungary\\
e-mail: barany.imre@renyi.mta.hu

\noindent and \\
Department of Mathematics \\
University College London \\
Gower Street, London WC1E 6BT, England

\noindent Daniel Hug\\
Karlsruhe Institute of Technology \\
Department of Mathematics \\
D-76128 Karlsruhe, Germany\\
e-mail: daniel.hug@kit.edu

\noindent Matthias Reitzner\\
Department of Mathematics\\
University of Osnabr\"uck\\
D-49076 Osnabr\"uck, Germany\\
e-mail: matthias.reitzner@uni-osnabrueck.de

\noindent Rolf Schneider\\
Mathematisches Institut\\
Albert-Ludwigs-Universit\"at\\
D-79104 Freiburg i. Br., Germany\\
e-mail: rolf.schneider@math.uni-freiburg.de


\begin{thebibliography}{10}
\bibitem{AZ91} Arbeiter, E., Z\"ahle, M.: Kinematic relations for Hausdorff moment measures in spherical spaces. {\em Math. Nachr.} {\bf 153} (1991), 333--348.
\bibitem{Bar92}  B\'{a}r\'{a}ny, I.: Random polytopes in smooth convex bodies. {\em Mathematika} {\bf 39} (1992), 81--92. Corrigendum: Mathematika, {\bf 51} (2004), 31.
\bibitem{Car70} Carnal, H.: Die konvexe H\"ulle von $n$ rotationssymmetrisch verteilten Punkten.  {\em Z.
Wahrscheinlichkeitsth. verw. Geb.} {\bf 15} (1970), 168--176.
\bibitem{CM09} Cowan, R., Miles, R.E.: Letter to the Editor. {\em Adv. Appl.Prob. (SGSA)} {\bf 41} (2009), 1002--1004.
\bibitem{DW96} D\"umbgen, L., Walther, G.: Rates of convergence for random approximations of convex sets. {\em Adv. Appl. Prob. (SGSA)} {\bf 28} (1996), 384--393.
\bibitem{GHS02} Gao, F., Hug, D., Schneider, R.: Intrinsic volumes and polar sets in spherical space. {\em Math. Notae} {\bf 41} (2002), 159--176.
\bibitem{GH50} Gr\"obner, W., Hofreiter, N.: {\em Integraltafel. Zweiter Teil, Bestimmte Integrale}. Springer, Wien, 1950.
\bibitem{Gru03} Gr\"unbaum, B.: {\em Convex Polytopes.} Second ed., Springer, New York, 2003.
\bibitem{Hug13} Hug, D.: Random polytopes. In: {\em Stochastic Geometry, Spatial Statistics and Random Fields} (E. Spodarev, ed.), pp. 205--238, Lecture Notes Math. {\bf 2068}, Springer, Berlin, 2013.
\bibitem{HS14} Hug, D., Schneider, R.: Approximation properties of random polytopes associated with Poisson hyperplane processes. {\em Adv. Appl. Prob. (SGSA)} {\bf 46} (2014), 919--936.
\bibitem{Mil71} Miles, R.E.: Random points, sets and tessellations on the surface of a sphere. {\em Sankhy$\overline{a}$} {\bf 33} (1971), 145--174.
\bibitem{Rei04} Reitzner, M.: Stochastical approximation of smooth convex bodies. {\em Mathematika} {\bf 51} (2004), 11--29.
\bibitem{Rei10} Reitzner, M.: Random polytopes. In: {\em New Perspectives in Stochastic Geometry} (W.S. Kendall, I. Molchanov, eds.), pp. 45--76, Oxford University Press, 2010.
\bibitem{RS63} R\'enyi, A., Sulanke, R.: \"Uber die konvexe H\"ulle von $n$ zuf\"allig gew\"ahlten Punkten. {\em Z. Wahrscheinlichkeitsth. verw. Geb.} {\bf 2} (1963), 75--84.
\bibitem{RS64} R\'enyi, A., Sulanke, R.: \"Uber die konvexe H\"ulle von $n$ zuf\"allig gew\"ahlten Punkten. II.  {\em Z. Wahrscheinlichkeitsth. verw. Geb.} {\bf 3} (1964), 138--147.
\bibitem{SW08} Schneider, R., Weil, W.: {\em Stochastic and Integral Geometry.} Springer, Berlin, 2008.
\bibitem{VVu05} Vu, V. H.: Sharp concentration of random polytopes. {\em Geom. Funct. Anal.} {\bf 15} (2005), 1284--1318.

\end{thebibliography}
\end{document}